\def\XXint#1#2#3{{\setbox0=\hbox{$#1{#2#3}{\int}$ }
\vcenter{\hbox{$#2#3$ }}\kern-.6\wd0}}
\newcommand{\blue}{\color{blue}}
\newcommand{\red}{\color{red}}
\newcommand{\edit}[1]{ {\red \cs #1 \cs}}
\newcommand{\cs}{$\clubsuit$}
\newcommand{\wt}{\widetilde}
\newcommand{\Z}{{\mathbb Z}}
\numberwithin{equation}{section}
\newtheorem{theo}{{\sc Theorem}}
\newtheorem{lem}[theo]{{\sc Lemma}}
\newenvironment{rem}{\medskip\noindent{\it Remark:\/} }{\medskip}
\newcommand{\R}{{\mathbb R}}
\newtheorem*{main-theorem}{Main Theorem}
\newtheorem{proposition}{Proposition}[section]
\newtheorem{theorem}{Theorem}[section]
\newtheorem*{old-thm}{Theorem}
\newtheorem{lemma}[proposition]{Lemma}
\newtheorem{corollary}[proposition]{Corollary}
\newtheorem{conjecture}[theorem]{Conjecture}
\theoremstyle{definition}
\numberwithin{equation}{section}
\def\11{\mathds{1}}
\def\Ci{{\mathcal C}^\infty}
\def\supp{\mathrm{supp}\,}
\def\phi{\varphi}
\def\be{\begin{eqnarray*}}
\def\ee{\end{eqnarray*}}
\def\ben{\begin{eqnarray}}
\def\een{\end{eqnarray}}
\def\lll{\left\langle}
\def\rrr{\right\rangle}
\def\L2R{L_{\text{Rest}}^2}
\def\tchi{\tilde{\chi}}
\newcommand{\comp}{\operatorname{comp}}
\newcommand{\mcal}{\mathcal{M}}
\newcommand{\qcal}{\mathcal{Q}}
\newcommand{\scal}{\mathcal{S}}
\newcommand{\tcal}{\mathcal{T}}
\newcommand{\e}{\epsilon}
\begin{document}
\title[Lower bounds for Cauchy data]{Lower bounds for Cauchy data on curves in a negatively curved surface}

\author{Jeffrey Galkowski}

\address{Department of Mathematics, University College, London}\email{j.galkowski@ucl.ac.uk}
\author{Steve Zelditch}
\address{Department of Mathematics, Northwestern  University,
Evanston, IL 60208-2370}  \email{zelditch@math.northwestern.edu}

\begin{abstract}
We prove a uniform lower bound on Cauchy data on an arbitrary curve on a negatively curved
surface using the Dyatlov-Jin(-Nonnenmacher)  observability estimate on the global surface.  In the process, we prove some further results about defect measures of   restrictions of eigenfunctions  to a hypersurface.

\end{abstract}

\maketitle


\section{Introduction}
\label{introduction}

The purpose of this article is to prove  a positive lower bound for $L^2$ norms of  Cauchy data of Laplace eigenfunctions
on curves of a negatively curved surface $(M,g)$ without boundary.   Let $\{\phi_j\}_{j=0}^{\infty}$ be an orthonormal basis of eigenfunctions
of the Laplacian, $$
 \begin{array}{l}
  -\Delta_g \phi_j = \lambda_j^2 \phi_j, \;\;\; \langle \phi_j, \phi_k \rangle = \delta_{jk} \\
 \end{array},$$
where $\langle f, g \rangle = \int_M f \bar{g} dV$ ($dV$ is the
volume form of the metric)   
Let  $H \subset M$ be a smooth hypersurface. 
The  semiclassical Cauchy data along $H$ is defined by 
 \begin{equation} \label{CD} CD(\phi_j)  := \{(\phi_j |_{H}, \;
\lambda_j^{-1}  D_{\nu} \phi_j |_{H}) \}
\end{equation}
where $D_{\nu}:=-i\partial_\nu$ and $\nu$ is a choice of unit normal to $H$.
For technical reasons, we also introduce 
the {\it renormalized Cauchy data} \begin{equation} \label{RCD}  RCD(\phi_j) = \{(1 +
 \lambda_j^{-2} \Delta_H) \phi_{j} |_H, h D_{\nu} \phi_j |_H) \}.  \end{equation} Here, $ \Delta_H$ denotes the negative
tangential Laplacian for the induced metric on $H$. Hence, as a semiclassical pseudodifferential operator,
the operator $(1+\lambda_j^{-2} \Delta_H)$ is characteristic precisely on the
glancing set $S^*H$ of $H$ and damps  out the whispering gallery
components of $\phi_j |_H$.
 
 In what follows, we use semi-classical notation $h_j = \lambda_j^{-1}$, since we will be using  semi-classical pseudo-differential calculus. Our first
 result gives a lower bound on the $L^2$ norm of \eqref{RCD} along $H$, and in fact a lower bound for more general `matrix elements' relative to semi-classical
 pseudo-differential operators $Op_h(a)$ of order $0$ on $L^2(H)$ with semi-classical symbol $a \in S^0_{cl}(B^*H)$. We denote by
  $$B^*H := \{(q, \xi) \in T^* H: |\xi|_H < 1\}$$ the
unit co-ball bundle of $H$;   $|\cdot|_H$ is the restriction of $g$ to $T^*H$.  We also denote by $\gamma_H f : = f |_H$ the
 restriction of $f \in C(M)$ to $H$.
\begin{theorem} \label{thm1}
\label{T:LefINTRO}  Let $(M,g)$ be a compact, negatively curved surface without boundary and $H \subset M$ a smooth
 curve. Then there exist $h_0 >0$ and $C_H>0$  so that,
 $$\begin{array}{l}
\|  (h_jD_\nu \phi_j) |_{H} \|_{L^2(H)}^2 + \|  \phi_{j} |_H \|_{L^2(H)}^2  \geq C_{H} > 0.
\end{array}$$ More generally, this inequality can be mircolocalized: for
$a \in S^0_{sc}(H)$, such that $Op_h(a)$ has principal symbol, $a_0(x',\xi')$, satisfying $a_0 \geq 0$ on $B^*H$ and $\supp a_0\cap B^*H\neq \emptyset$, there exist $h_0>0$ and $C_{a_0} >0$ so that for $0<h<h_0$,
\begin{multline*}
\lll Op_h(a) h_j D_\nu \phi_j |_{H} ,  h_j D_\nu \phi_j |_H \rrr_{L^2(H)}  \\+ \lll Op_h(a) (1 +
 h_j^2 \Delta_H) \phi_{j} |_H, \phi_{j} |_H
\rrr_{L^2(H)}  \geq C_{a_0} .
\end{multline*}
\
\end{theorem}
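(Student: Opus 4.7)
The natural route is a contradiction argument driven by semiclassical defect measures, with the Dyatlov--Jin--Nonnenmacher observability estimate playing the role of the non-concentration input.

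Suppose the microlocalized inequality fails. Then there are sequences $h_{j_k}\to 0$ and eigenfunctions $\phi_{j_k}$ along which both matrix elements in the statement go to zero. Pass to a subsequence so that $\phi_{j_k}$ has a semiclassical defect measure $\mu$ on $S^*M$ (a probability measure invariant under the geodesic flow $\varphi_t$), and so that the sequences $\phi_{j_k}|_H$ and $h_{j_k}D_\nu\phi_{j_k}|_H$ have defect measures $\mu^D_H,\mu^N_H$ on $T^*H$.

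The key step is to invoke the boundary--interior relation for these defect measures, which is exactly what the paper's ``further results about defect measures of restrictions'' should supply. In the elliptic (co-ball) region $B^*H$, transverse geodesics hit $H$ from both sides, and the standard Rellich-type calculation gives a positive combination of the form
\begin{equation*}
d\nu := (1-|\xi'|_H^2)\,d\mu^D_H + d\mu^N_H \;=\; 2\sqrt{1-|\xi'|_H^2}\,\pi_*\!\left(\mu\!\upharpoonright_{S^*_H M}\right) \quad\text{on } B^*H,
\end{equation*}
where $\pi\colon S^*_H M\to B^*H$ is the projection $(x,\xi)\mapsto (x,\xi|_{TH})$. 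The damping factor $1+h_j^2\Delta_H$ in the renormalized Cauchy data is designed precisely so that its principal symbol $1-|\xi'|_H^2$ appears here, killing the whispering-gallery contribution at the glancing set $S^*H$. Consequently, the hypothesis that both matrix elements tend to zero yields $\int_{B^*H} a_0\,d\nu = 0$, and since $a_0\ge 0$ with $\supp a_0\cap B^*H\neq\emptyset$, the measure $\nu$, and therefore $\pi_*(\mu\!\upharpoonright_{S^*_H M})$, vanishes on an open subset $U_0\subset B^*H$.

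Now pull back and flow out: the set $V_0 := \pi^{-1}(U_0)\subset S^*_H M\setminus S^*H$ is open in $S^*_H M$ away from glancing, and $\mu(V_0)=0$. By flow invariance of $\mu$, the flow-out
\begin{equation*}
W := \bigcup_{|t|<\varepsilon}\varphi_t(V_0)
\end{equation*}
is an open subset of $S^*M$ with $\mu(W)=0$; it is nonempty and open because $V_0$ is transversal to the geodesic flow (we have excluded the glancing directions). This, however, contradicts the Dyatlov--Jin--Nonnenmacher observability estimate on the negatively curved surface $(M,g)$: that estimate forces every defect measure of eigenfunctions to have full support in $S^*M$, so no nonempty open set can have $\mu$-measure zero. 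The unmicrolocalized statement follows by taking $a_0\equiv 1$ (or, more honestly, any $a_0$ supported in the interior of $B^*H$ with $a_0\equiv 1$ on a large compact set), noting that the glancing contribution is already suppressed by the renormalization. The main obstacle is the second step: establishing the boundary--interior formula for $\nu$ with the correct positivity and handling of the glancing set $S^*H$; this is presumably the content of the companion results on restricted defect measures proved earlier in the paper.
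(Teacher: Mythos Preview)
Your overall strategy---contradiction via defect measures, a Rellich boundary--interior identity, then Dyatlov--Jin--Nonnenmacher full support---is precisely the paper's route. But the boundary--interior formula you write is wrong in a way that invalidates the flow-out step.

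The measure produced by the Rellich computation is \emph{not} the restriction $\mu\!\upharpoonright_{S^*_HM}$; it is the cross-section (disintegration) measure $\nu^\perp$ on $S^*_HM$ defined by $\nu^\perp(A)=\lim_{T\to 0^+}\frac{1}{2T}\,\mu\bigl(\bigcup_{|t|\le T}\varphi_t(A)\bigr)$, and the paper's Theorem~\ref{t:RCD} gives $\mu^{RCD}(a)=\int_{S^*_HM\setminus S^*H}a(\pi(\zeta))\,|\xi_n(\zeta)|\,d\nu^{\perp}(\zeta)$. These are very different objects: Theorem~\ref{t:invMeas} shows that for \emph{any} flow-invariant probability measure, the restriction $\mu\!\upharpoonright_{S^*_HM}$ is supported only in the glancing set $S^*H$, so your displayed identity would read $0=0$ over the interior of $B^*H$ and carry no information. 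Correspondingly, the step ``$\mu(V_0)=0$, hence by flow invariance $\mu(W)=0$'' fails: flow invariance gives $\mu(\varphi_t(V_0))=0$ for each fixed $t$, but $W$ is an uncountable union of such null sets (Liouville measure is already a counterexample---every transversal $V_0$ has $\mu_L(V_0)=0$ while its short-time flow-out has positive measure). The correct deduction is that $\nu^\perp=0$ on an open $V_0\subset S^*_HM\setminus S^*H$ forces $\mu\bigl(\bigcup_{|t|\le\varepsilon}\varphi_t(V_0)\bigr)=0$ \emph{by the definition} of $\nu^\perp$; equivalently, and this is how the paper argues in Lemma~\ref{DJI}, full support of $\mu$ forces full support of $\nu^\perp$ on $S^*_HM$, which makes the integral strictly positive.
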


 Note that the  second inequality of Theorem \ref{thm1}  implies the same result for 
Cauchy data. 

\begin{corollary} \label{thm2}
\label{T:Lef} With the same assumptions and notations as in Theorem \ref{thm1}, $$\begin{array}{l}
\lll Op_h(a) h_j D_\nu \phi_j |_{H} ,  h_j D_\nu \phi_j |_H \rrr_{L^2(H)}  + \lll Op_h(a)  \phi_{j} |_H, \phi_{j} |_H
\rrr_{L^2(H)}\geq C_{a_0} > 0.\\ \\
\end{array}$$

\end{corollary}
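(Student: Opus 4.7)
The plan is to deduce the corollary directly from Theorem \ref{thm1} by microlocal manipulations that introduce only $o(1)$ errors. First, choose a cutoff $\chi\in C_c^\infty(B^*H)$ with $0\le\chi\le 1$ such that $\chi\equiv 1$ on a neighborhood of some point $(q_0,\xi_0)\in B^*H$ at which $a_0(q_0,\xi_0)>0$; such a point exists since $a_0\ge 0$ on $B^*H$ with nonempty support intersection. Then $\tilde a:=\chi a$ satisfies the hypotheses of Theorem \ref{thm1} and has $\supp\tilde a_0\subset\{|\xi|_H\le 1-\delta\}$ for some $\delta>0$, so Theorem \ref{thm1} applied to $\tilde a$ gives
\[
\lll Op_h(\tilde a)h_jD_\nu\phi_j|_H, h_jD_\nu\phi_j|_H\rrr + \lll Op_h(\tilde a)(1+h_j^2\Delta_H)\phi_j|_H, \phi_j|_H\rrr \ge C_1>0.
\]

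Next, I eliminate the damping factor $(1+h_j^2\Delta_H)$: symbol calculus gives $Op_h(\tilde a)-Op_h(\tilde a)(1+h_j^2\Delta_H)=Op_h(\tilde a_0|\xi|_H^2)+O(h_j)_{L^2\to L^2}$, and since $\tilde a_0|\xi|_H^2\ge 0$, the sharp G{\aa}rding inequality yields
\[
\lll Op_h(\tilde a)(1+h_j^2\Delta_H)\phi_j|_H, \phi_j|_H\rrr \le \lll Op_h(\tilde a)\phi_j|_H, \phi_j|_H\rrr + Ch_j\|\phi_j|_H\|_{L^2(H)}^2.
\]
The Burq--G\'erard--Tzvetkov restriction bound $\|\phi_j|_H\|_{L^2(H)}=O(h_j^{-1/4})$ (valid for curves in a $2$-dimensional manifold, and with the analogous bound for $h_jD_\nu\phi_j|_H$) makes this error $o(1)$.

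Finally, I replace $\tilde a$ by $a$ using wavefront localization. The symbol $(a-\tilde a)_0=a_0(1-\chi)$ is nonnegative on $\bar B^*H$ by continuity of $a_0$, though possibly of either sign outside. Since both $\phi_j|_H$ and $h_jD_\nu\phi_j|_H$ have semiclassical wavefront set contained in $\bar B^*H$ (the projection of the characteristic set $S^*M$), I decompose $a-\tilde a=b_++b_-$ with $b_+$ nonnegative globally---for instance $b_+=F((a-\tilde a)_0)$ for a smooth $F$ equal to $t$ on $[0,\infty)$ and to $0$ on $(-\infty,-1]$---and $b_-$ supported in $T^*H\setminus\bar B^*H$. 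Then $Op_h(b_-)$ contributes only $O(h_j^\infty)$ to each matrix element, while sharp G{\aa}rding applied to $b_+$ gives $\lll Op_h(a-\tilde a)u,u\rrr \ge -o(1)$ for $u\in\{\phi_j|_H,\,h_jD_\nu\phi_j|_H\}$. Chaining the three estimates yields the corollary with constant $C_{a_0}=C_1/2$ for $h_j$ sufficiently small. The only ingredients beyond Theorem \ref{thm1} are the sharp G{\aa}rding inequality, the BGT restriction bound, and the wavefront localization of restrictions---all standard; the real work has already been done in Theorem \ref{thm1}, so no serious obstacle remains.
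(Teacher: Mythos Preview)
Your approach is essentially the paper's: it deduces the corollary in one line from Theorem~\ref{thm1} by observing that $1\ge 1-|\xi'|^2$ on $B^*H$ and that $\phi_j|_H$ concentrates on $\bar B^*H$ in the sense of \cite{CHT15}, so replacing $(1+h^2\Delta_H)$ by $I$ can only increase the Dirichlet term up to $o(1)$.

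Your detour through $\tilde a=\chi a$ is unnecessary, since you end up invoking concentration on $\bar B^*H$ in step~4 anyway; applying it directly in step~3 with the original $a$ would have been shorter. More importantly, step~4 has two small flaws. First, no smooth $F$ can satisfy $F(t)=t$ for $t\ge 0$ while being nonnegative everywhere: smoothness forces $F'(0)=1$, hence $F(t)<0$ for small $t<0$, so your $b_+$ is not globally nonnegative as claimed. Second, $\supp b_-$ is the closure of a set contained in $\{(a-\tilde a)_0<0\}\subset\{|\xi'|>1\}$, and this closure may meet the glancing set $S^*H$; the $O(h^\infty)$ wavefront argument then fails there. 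Both issues are cured by invoking the exterior mass estimate of \cite{CHT15} directly for symbols supported in $\{|\xi'|\ge 1\}$, rather than trying to manufacture a globally nonnegative splitting. As you correctly say, the substantive work is entirely in Theorem~\ref{thm1}.
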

Indeed, this follows from the statement  $1 \geq (1 - |\xi'|^2)$ on $B^*H$ relating
the symbols of $I$ and of $(I + h^2 \Delta_H)$. It is sufficient to consider this region since $\phi_j$ concentrates on it in a strong sense (see \cite{CHT15}).

\begin{rem}  
It is argued
 in \cite[p. 3063-3064]{BHT}  that the renormalized Dirichlet  data 
\eqref{RCD}

$$
\langle (1+h_j^2\Delta_H)\phi_j|_{H},\phi_j|_{H}\rangle_{L^2(H)}
$$
 is a closer analogue to the Neumann
 data $\|h_jD_\nu \phi_j\|_{L^2(H)}^2$ than is the traditional Dirichlet data.  In fact, one can see that $h_jD_\nu$ behaves similarly to $(1+h^2\Delta_H)_+^{\frac{1}{2}}u $~\cite{G16}.

\end{rem}

We give two  proofs of the result, by two rather different approaches whose contrast seems to be of some independent interest. Both 
are based on the Dyatlov-Jin(-Nonnenmacher)  observability estimate \cite{DJ17, DJN19}. The first proof is based on the
 Rellich identity of \cite{CTZ13} (adapted from \cite{Bu})   relating interior and restricted matrix elements and  microlocal lifts of eigenfunctions.  
The second is based on hyperbolic equations and is closely
related to results in \cite{GL17}. It yields the following version of Corollary \ref{thm2}.
\begin{theorem}
\label{t:CD}
Let $(M,g)$ be a negatively curved surface and $H\subset M$ a smooth curve. Then for all $0\neq b_0\in C_c^\infty(B^*H)$, 
 there is $C_{b_0}>0$ and $h_0>0$ such that if 
$$
\|(-h_j^2\Delta_g -1) \phi_j\|_{L^2}=o\big(\frac{h_j}{\log h_j^{-1}}\big)\|\phi_j\|_{L^2},
$$
then for $0<h_j<h_0$
\begin{equation}
\label{e:claim}
0<C_{b_0}\|\phi_j \|_{L^2} <\|Op_h(b_0) \phi_j|_{H}\|_{L^2(H)}+\|Op_h(b_0)h_j \partial_\nu \phi_j |_{H}\|_{L^2(H)}.
\end{equation}
Moreover, for all $U\subset H$ open, there is $c>0$ such that for all $\phi_j$ satisfying
$$
(-h_j^2\Delta_g-1)\phi_j=0,
$$
we have 
\begin{equation}
\label{e:lowerBound}
0<c\|\phi_j\|_{L^2(M)}<\|\phi_j|_{H}\|_{L^2(U)}+\|h_j\partial_\nu \phi_j|_{H}\|_{L^2(U)}.
\end{equation}
\end{theorem}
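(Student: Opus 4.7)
The plan is to combine a microlocal half-wave parametrix for $\phi_j$ in a tubular neighborhood of $H$, reconstructing $\phi_j$ from its Cauchy data, with the Dyatlov-Jin(-Nonnenmacher) observability estimate on $(M,g)$. In its microlocal form, the latter says that for any $A\in\Psi^0_h(M)$ with $\sigma(A)\not\equiv 0$ there exist $C_A, h_0>0$ so that $\|u\|_{L^2(M)}\le C_A\|Au\|_{L^2(M)}$ whenever $\|(-h^2\Delta_g-1)u\|_{L^2}=o(h/\log h^{-1})\|u\|_{L^2}$ and $0<h<h_0$. This is precisely the hypothesis in \eqref{e:claim}, and exact eigenfunctions satisfy it trivially, so \eqref{e:lowerBound} will be an immediate consequence of \eqref{e:claim}.

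First I would pass to Fermi normal coordinates $(x',x_n)$ near $H=\{x_n=0\}$, in which $-h^2\Delta_g = (hD_{x_n})^2 + R(x_n,x',hD_{x'}) + hQ$ with $\sigma(R)=r(x_n,x',\xi')=|\xi'|^2_{g_{x_n}}$. Because $b_0\in C_c^\infty(B^*H)$ is supported strictly inside $\{r(0,\cdot)<1\}$, on a conic neighborhood of $\supp b_0$ the Helmholtz operator factors microlocally as a product of two first-order operators with symbols $\xi_n\mp\sqrt{1-r}$. A standard hyperbolic parametrix in the normal variable, valid for $|x_n|\le\delta$ with $\delta>0$ fixed and small, then produces a representation
$$
\chi\,\phi_j\;=\;U_+(x_n)f_+ + U_-(x_n)f_- + r_{j,\delta},
$$
where $\chi$ microlocalizes to a conic neighborhood of $\supp b_0$, the $U_\pm(x_n)$ are FIOs quantizing the bicharacteristic flow of $\xi_n=\pm\sqrt{1-r}$, and $r_{j,\delta}=o(1)_{L^2(V_\delta)}$ by the log-quasimode hypothesis. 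Reading off Cauchy data at $x_n=0$, the map $(f_+,f_-)\mapsto(\phi_j|_H, hD_\nu\phi_j|_H)$ has principal symbol $\bigl(\begin{smallmatrix}1&1\\-\sqrt{1-r}&\sqrt{1-r}\end{smallmatrix}\bigr)$, elliptic on $\supp b_0$; inverting yields
$$
\|Op_h(b_0)f_\pm\|_{L^2(H)} \le C\bigl(\|Op_h(b_0)\phi_j|_H\|_{L^2(H)} + \|Op_h(b_0)hD_\nu\phi_j|_H\|_{L^2(H)}\bigr).
$$

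Next, the $x_n$-bicharacteristic flowout $\Omega\subset T^*M$ of $\supp b_0$ for $|x_n|\le\delta$ is open and nonempty; choose $A\in\Psi^0_h(M)$ with $\sigma(A)$ a nonnegative bump supported in $\Omega$. DJN gives $\|\phi_j\|_{L^2(M)}\le C_A\|A\phi_j\|_{L^2(M)}$, while Egorov's theorem and $L^2$-boundedness of $AU_\pm(x_n)$ give
$$
\|A\phi_j\|_{L^2(M)}\le C\bigl(\|Op_h(b_0)f_+\|_{L^2(H)}+\|Op_h(b_0)f_-\|_{L^2(H)}\bigr)+o(1).
$$
Chaining the inequalities and absorbing $o(1)$ for small $h$ proves \eqref{e:claim}. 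The estimate \eqref{e:lowerBound} follows by applying \eqref{e:claim} with $b_0(x',\xi')=\chi(x')\psi(\xi')$ where $0\neq\chi\in C_c^\infty(U)$ and $0\neq\psi\in C_c^\infty(B^*H)$: the Schwartz kernel of $Op_h(b_0)$ is essentially localized to $\supp\chi\times\supp\chi$ modulo $O(h^\infty)$, so $\|Op_h(b_0)u\|_{L^2(H)}\le C\|u\|_{L^2(U)}+O(h^\infty)\|u\|_{L^2(H)}$.

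The main obstacle I anticipate is ensuring that the parametrix remainder $r_{j,\delta}$ is genuinely $o(1)_{L^2}$ under the sharp log-quasimode threshold $o(h/\log h^{-1})$. This requires careful accounting of how errors from the subprincipal term $hQ$ and from the equation residual accumulate over the fixed normal time $\delta$, the sort of bookkeeping already developed in \cite{GL17}. A secondary point is justification of the microlocal version of DJN with symbol supported in $\Omega$ rather than in a fixed open subset of $M$; this is by now essentially standard via the full-support property of semiclassical defect measures established in \cite{DJ17,DJN19}.
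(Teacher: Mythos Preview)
Your argument for \eqref{e:claim} is correct and follows the same strategy as the paper: factor $-h^2\Delta_g-1$ microlocally in the hyperbolic region over $\supp b_0\subset B^*H$, propagate from the Cauchy data on $H$ to control $\|Op_h(a)u\|_{L^2(M)}$ for some $a$ nontrivial on $S^*M$, and invoke DJN. The paper implements the propagation step via energy estimates for the first-order factors $hD_{x_n}\mp\Lambda_\pm$ (their Lemma~\ref{l:propagate}), whereas you use an FIO half-wave parametrix; these are interchangeable. Your worry about the remainder $r_{j,\delta}$ under the sharp threshold $o(h/\log h^{-1})$ is unfounded: the propagation estimate loses only a single power of $h$ (in the paper, $Ch^{-1}\|(-h^2\Delta_g-1)u\|$), so the quasimode term contributes $o(1/\log h^{-1})\|u\|=o(1)\|u\|$; the logarithm is consumed only by DJN itself.

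There is, however, a genuine gap in your derivation of \eqref{e:lowerBound}. Your pseudolocality argument with $b_0=\chi(x')\psi(\xi')$, $\supp\chi\subset U$, gives
\[
\|Op_h(b_0)\,\phi_j|_H\|_{L^2(H)}\le C\|\phi_j|_H\|_{L^2(U)}+O(h^\infty)\|\phi_j|_H\|_{L^2(H)},
\]
and with a priori restriction bounds the $O(h^\infty)$ term is absorbable---but only for $0<h_j<h_0$, since \eqref{e:claim} carries that restriction. The statement \eqref{e:lowerBound} is for \emph{all} eigenfunctions, with a single constant $c>0$. The paper closes this gap by invoking the unique continuation estimate of \cite{GL17},
\[
\|u\|_{L^2(M)}\le Ce^{C/h}\bigl(\|u|_H\|_{L^2(U)}+\|h\partial_\nu u|_H\|_{L^2(U)}\bigr),
\]
valid for all $h>0$; this supplies a uniform (exponentially poor but positive) lower bound for the finitely many eigenvalues with $h_j\ge h_0$, and the two regimes together yield \eqref{e:lowerBound}. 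You should add this step.
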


In Section \ref{FURTHER}, we state some further results on microlocal defect measures $\mu$, in particular on the possible case where
$\mu (S^*_H M) > 0$. Here (and hereafter) $S^*_H M = \{(x, \xi) \in S^*M: x \in H \}$.

\subsection{Background} 
 
 The first  proof of Theorem \ref{thm1} develops the Rellich identity approach of 
  \cite{CTZ13} . In that article, it   is proved that a sequence of  renormalized Cauchy data  \eqref{RCD} of eigenfunctions  is 
 quantum ergodic along any hypersurface $H \subset M$ if the sequence of
eigenfunctions is quantum ergodic on the global manifold $M$. 
It is not known whether the full orthonormal basis of eigenfunctions of a negatively curved compact surface is quantum ergodic  (QE), but
a recent result of  Dyatlov-Jin(-Nonnenmacher)  shows that its microlocal defect measures must have {\it full support}, that is, they charge (give positive mass to) any open set. 
The Dyatlov-Jin(-Nonnenmacher) theorem is a 
microlocal observability estimate for quasimodes $u$ of compact hyperbolic (or more generally negatively curved)  surfaces:
 For all $u\in H^2(M)$, and all $a \in C_0^{\infty}(T^*M)$ with $a \geq 0$,
 $a$ not identically zero on $S^*M$,  there exists a
 constant $C(a) > 0$ so that
\begin{equation} \label{DJLB} || u||_{L^2} \leq C(a) ||Op_{h}(a) u||_{L^2}
+ \frac{C(a) \log(1/h)}{  h } ||(-h^2 \Delta - I) u||_{L^2}
. \end{equation}
In particular, if $u$ is an eigenfunction the second term is zero and one
has a lower bound on $\|Op_h(a)u\|_{L^2(M)}$.
 Theorem \ref{thm1} gives  a similar full support property for the restricted  microlocal defect measures of the Cauchy data on
a curve.

The results may be stated in terms of
microlocal defect measures (quantum limits). Several are involved: the
global microlocal defect measures on $M$, and restricted microlocal
defect measures on $H$, both for Neumann data, Dirichlet data and for
renormalized Dirichlet data. 

We denote by $p(x, \xi)= |\xi|_g^2$ the principal symbol of the Laplacian  $-\Delta_g$. We denote by $H_p$ its Hamilton vector field, and  by  $\phi_t:=\exp(tH_{|\xi|_g^2})$ its Hamiltonian flow, i.e. the geodesic flow $\phi_t: S^*M \to S^*M$  of $(M, g)$ on its unit cosphere bundle.

We  use  some notation and background
on  the semiclassical calculus of pseudo-differential operators as in the references \cite{Bu,DZ,CTZ13,Zw}. On both $H$ and $M$ we fix  (Weyl) quantizations $a \to a^w$ of semi-classical symbols to
semi-classical pseudo-differential operators. 
 When it is necessary to indicate  which manifold is involved, we use
 Fermi coordinates $(x', x_n) $ with $x'$ coordinates on $H$ (Section \ref{LAPSECT}), and we use capital letters $A^w(x, h D)$ to indicate operators on $M$ and
small letters $Op_h(a) = a^w(x', h D_{x'})$ to indicate  semi-classical 
pseudo-differential operators on $H$ (denoted by $\Psi_{sc}^*(H)$).

We recall that microlocal defect measures  are the semi-classical  limits of the functionals
$a \to \langle a^w(x,h_jD_x) u_j, u_j \rangle$, which are  often referred to as microlocal lifts or   Wigner distributions. 
 Let $\qcal^*$ denote the set of all microlocal defect measures for an orthonormal basis $\{\phi_j\}$ of eigenfunctions. That is, $\mu \in \qcal^*$ if
there exists a subsequence $\scal = \{\phi_{j_k}\}$ of eigenfunctions for
which $\langle A \phi_{j_k}, \phi_{j_k} \rangle \to \int_{S^*M} \sigma_A d\mu$. The microlocal defect measures on the global manifold $M$ are invariant
probability measures for the geodesic flow $\phi_t: S^*M \to S^*M$ (see e.g.~\cite[Chapter 5]{Zw}).
Throughout, we denote by $\mcal(X)$ the space of positive measures on a metric space $X$, $\mcal_1(X)$ the space of probability
measures and $\mcal_I(X)$ the space of invariant measures under a flow on $X$.

The Dyatlov-Jin observability estimate for eigenfunctions implies the following: \begin{theo} Let $(M,g)$ be a compact negatively curved surface without boundary, let $\{\phi_j\}$ be any choice of orthonormal basis of eigenfunctions,
and let $A \in \Psi^0(M)$ be a pseudo-differential operator of order zero with
a  non-negative  principal symbol $\sigma_A$ not vanishing identically on $S^*M$. Then,
\begin{equation} \label{INF} \inf_{\mu \in \qcal^*} \int_{S^*M} \sigma_A d\mu \geq C_A > 0. \end{equation}
\end{theo}

From \eqref{INF}, it follows  that all microlocal defect measures of 
eigenfunctions of compact negatively curved  surfaces have {\it full support}, 
i.e. charge every open set of $S^*M$.

Given a quantization $a \to Op_h(a)$  of semi-classical symbols $ a \in S^{0}_{sc}(H)$ of order zero (see \cite{Zw}) to semi-classical pseudo-differential operators on $L^2(H)$,
we  define the microlocal lifts of the
Neumann data as the  linear functionals on $a \in S^{0}_{sc}(H)$ given by 
$$\mu_j^N(a): = \int_{B^* H} a  \, d\Phi_j^N : = \langle Op_h(a) h_j D_{\nu} \phi_j
|_{H}, h_j D_{\nu} \phi_j |_{H}\rangle_{L^2(H)}.  $$ 
We define the microlocal lifts of the Dirichlet date by
$$\mu_j^{D}(a): = \int_{B^* H} a \, d\Phi_j^{D} : = \langle Op_h(a) \phi_{j} |_H,  \phi_{j}|_H \rangle_{L^2(H)}.
$$
also  define the microlocal lifts of the modified Dirichlet
data by
$$\mu_j^{RD}(a): = \int_{B^* H} a \, d\Phi_j^{MD} : = \langle Op_h(a) (1 +
h_j^2 \Delta_H) \phi_{j} |_H,  \phi_{j}|_H \rangle_{L^2(H)}.
$$
Finally, we  define the microlocal lift $d \Phi_j^{RCD}$ of the renormalized Cauchy data  to be the sum
\begin{equation} \label{WIGCD} d \Phi_j^{RCD} := d \Phi_j^N + d \Phi_j^{RD}. \end{equation}
The weak* limits of the above microlocal lifts are termed microlocal defect
measures, respectively, of the Neumann, Dirichlet or renormalized Dirichlet type.

The distributions $\mu_j^N, \mu_j^D$  are asymptotically positive, but are not
normalized to have mass one and may tend to infinity.  
They  depend on the choice of quantization, but their possible weak* limits as $h_j \to 0$
do not, and the results of the article are valid for any choice of  quantization. We refer to  \cite{Zw} for background on semi-classical microlocal analysis.

\begin{theorem} \label{thm3}
\label{T:LefINTRO2} Let $(M,g)$ be a compact  surface without boundary of negative curvature. Suppose $H \subset M$ is a  curve. Then, for any
 $a \in S^0_{sc}(H),$ 
$$\begin{array}{l}  \inf_{\mu \in \qcal^*(RCD)} 
 \int_{B^*H}a d\mu  \geq C_{ a_0} > 0
\end{array}$$
and therefore, 
$$\begin{array}{l}  \inf_{\mu \in \qcal^*(CD)} 
 \int_{B^*H}a d\mu  \geq C_{ a_0} > 0
\end{array}$$
\end{theorem}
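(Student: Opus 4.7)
My plan is to read off Theorem \ref{thm3} directly from Theorem \ref{thm1} and Corollary \ref{thm2} by a weak$^*$ limit argument; the substantive analytic work has already been done, and only a routine passage-to-the-limit step remains.

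The first step is to fix $a \in S^0_{sc}(H)$ with principal symbol $a_0 \geq 0$ on $B^*H$ and $\supp a_0 \cap B^*H \neq \emptyset$, and to pick an arbitrary $\mu \in \qcal^*(RCD)$. By definition of the renormalized Cauchy data lift \eqref{WIGCD}, there is a subsequence $\{\phi_{j_k}\}$ along which
\[
\int_{B^*H} a\, d\mu \;=\; \lim_{k\to\infty}\bigl[\mu_{j_k}^N(a) + \mu_{j_k}^{RD}(a)\bigr].
\]
The second step is to apply Theorem \ref{thm1} directly to this sequence: it furnishes $h_0 > 0$ and $C_{a_0} > 0$ such that the bracketed expression is bounded below by $C_{a_0}$ once $h_{j_k} < h_0$. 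Passing to the limit yields $\int_{B^*H} a\, d\mu \geq C_{a_0}$, and taking the infimum over $\mu$ proves the first inequality of the theorem.

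For the Cauchy data version I would simply rerun the identical argument with Corollary \ref{thm2} in place of Theorem \ref{thm1}. Alternatively, it can be read off symbolically from the RCD version: the semiclassical composition formula gives
\[
Op_h(a)(1 + h^2\Delta_H) \;=\; Op_h\bigl(a(1 - |\xi'|^2)\bigr) + O_{L^2\to L^2}(h),
\]
so along any subsequence for which both limits exist one has $\int a\, d\mu^{RD} = \int a(1 - |\xi'|^2)\, d\mu^D$ on $B^*H$. Since $0 \leq 1 - |\xi'|^2 \leq 1$ there and $a \geq 0$, this forces $\int a\, d\mu^D \geq \int a\, d\mu^{RD}$, and adding the common Neumann contribution produces the CD lower bound from the RCD one.

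No substantial obstacle arises; the only prerequisites beyond Theorem \ref{thm1} and Corollary \ref{thm2} are the standard uniform bounds $\|h_j \partial_\nu \phi_j|_H\|_{L^2(H)} = O(1)$ and the analogous bound on $(1 + h_j^2 \Delta_H)\phi_j|_H$ (for which eigenfunction concentration in $B^*H$ as in \cite{CHT15} is invoked), which guarantee the weak$^*$ compactness that makes $\qcal^*(RCD)$ and $\qcal^*(CD)$ consist of bona fide positive Radon measures on $B^*H$.
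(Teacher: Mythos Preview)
Your proof is correct and matches the paper's implicit treatment: Theorem~\ref{thm3} is stated in the introduction as a reformulation of Theorem~\ref{thm1} in the language of defect measures, and the paper gives no separate argument beyond the routine weak$^*$ limit passage you describe. The symbolic deduction of the CD bound from the RCD bound via $1 \geq 1 - |\xi'|^2$ on $B^*H$ is exactly the one the paper records when deriving Corollary~\ref{thm2}.
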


 One should be aware of an  obstruction to obtaining lower bounds on \eqref{RCD} from  lower bounds on \eqref{CD} for hypersurfaces
 in a general Riemannian manifold. Suppose that the  eigenfunctions $\phi_j$ are the
 highest weight spherical harmonics $Y_N^N(\theta, y) \simeq e^{i N \theta} e^{- N y^2/2}$ on $S^2$ along the equator $\gamma$. 
 The normal $y$-derivative vanishes since $Y_N^N$ is even under reflection across $\gamma$, i.e. $y \to - y$ in Fermi normal coordinates.
 But the restriction is $e^{i N \theta}$ and it is killed by $(I + N^{-2} \frac{\partial^2}{\partial \theta^2})$. Hence, the renormalized Cauchy
 data vanishes.  As this example shows, 
Rellich-type identities for renormalized Cauchy data {are not necessarily equivalent to bounds} on Cauchy data  \eqref{CD} because of the effects of
 concentration in tangential directions to $H$.  Theorem \ref{thm1} nevertheless gives a positive lower bound for curves on a negatively curved surface,
 because the Dyatlov-Jin lower bound \eqref{DJLB} implies that restrictions of eigenfunctions on such surfaces cannot concentrate entirely in the tangential
 directions.

\begin{rem} \label{INDINT} The proof of Theorem \ref{thm1} using the Rellich identity  is a continuation of the proof in \cite{CTZ13} in the case where
the microlocal defect measure of a sequence is Liouville measure $\mu_L$. Since the calculations in the case of a general microlocal defect
measure of independent interest, we present most of the details in all dimensions and in more detail than is strictly necessary for the
proof of Theorem \ref{thm1}. We only specialize to curves in a negatively curved surface at the end of the proof.

\end{rem}

\subsection{\label{FURTHER} Results on microlocal defect measures}

In the process of proving Theorem~\ref{thm1} via the Rellich formula we will obtain some facts about the collection of defect measures for eigenfunctions that are of independent interest. First, we study the restrictions of $\mu\in \mathcal{M}_I\cap \mathcal{M}_1$ to a hypersurface $H$
\begin{theorem}
\label{t:invMeas}
 Let $H\subset M$ be a hypersurface and $\mu\in \mathcal{M}_1(S^*M)\cap \mathcal{M}_I(S^*M)$ where the relevant flow is $\phi_t$. Then,
 \begin{itemize} \item $\mu |_{S^*_H M} = \mu |_{S^*H} $; 
 \item   the support of $\mu |_{S^*H}$ is  
    is contained in the null-space of the second fundamental form $Q(0, x', \xi')$, i.e. $Q(0, x', \xi')= 0$ for $(x', \xi') \in \rm{Supp} (\mu |_{S^*H}). $ 
    \item  Hence, 
    $\mu |_{S^*H}$ is supported on the subset of $S^* H$ where the Hamilton vector field $H_p $ coincides with the Hamilton vector field   $H_{p |_{S^*H}}$ of
    the submanifold metric norm,    $g_H(x', \xi') = |\xi'|^2_{g_H}$.
    \end{itemize}
\end{theorem}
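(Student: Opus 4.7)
The plan is to prove the three bullets successively, working in Fermi coordinates $(x', x_n)$ near $H = \{x_n = 0\}$ so that $g = dx_n^2 + g_H(x', x_n)$ and $p(x, \xi) = \xi_n^2 + |\xi'|^2_{g_H(x', x_n)}$, in which $S^*_H M = \{x_n = 0\} \cap S^*M$ and $S^*H$ is the further locus $\{\xi_n = 0\}$ inside it. The first bullet follows from a standard transversality argument: on $S^*_H M \setminus S^*H$ one has $\xi_n \neq 0$ and $H_p(x_n) = 2\xi_n \neq 0$, so $H_p$ is transverse to $S^*_H M$. Near any compact $K \subset S^*_H M \setminus S^*H$ the map $(v, t) \mapsto \phi_t(v)$, $|t| < \e$, is a diffeomorphism onto an open subset of $S^*M$, and by flow-invariance $\mu$ factors as a product of a measure on $K$ with Lebesgue $dt$; since $\{t = 0\}$ is $dt$-null this gives $\mu(K) = 0$, and exhaustion of $S^*_H M \setminus S^*H$ by compacts yields $\mu|_{S^*_H M} = \mu|_{S^*H}$.

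For the second bullet the flow is tangent to $S^*_H M$ along $S^*H$, so transversality fails and I will exploit second-order recoil instead. At $v = (x', 0, \xi', 0) \in S^*H$ a direct computation gives $H_p(x_n)|_v = 0$ and $H_p^2(x_n)|_v = -2\partial_{x_n}|\xi'|^2_{g_H(x', x_n)}|_{x_n = 0} = -2Q(0, x', \xi')$. Suppose, for contradiction, that a compact $A \subset S^*H$ satisfies $|Q| > \e$ on $A$ and $\mu(A) > 0$; by uniform Taylor expansion there exists $\delta > 0$ with $x_n(\phi_t(v)) \neq 0$ for every $v \in A$ and every $0 < |t| < \delta$, so $\phi_t(A) \cap S^*_H M = \emptyset$, and in particular $\phi_t(A) \cap A = \emptyset$. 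The countable family $\{\phi_{\delta/n}(A)\}_{n \geq 2}$ is then pairwise disjoint: for $n < m$, any intersection would give $A \cap \phi_{\delta/n - \delta/m}(A) \neq \emptyset$ with $0 < \delta/n - \delta/m < \delta$, a contradiction. Flow-invariance yields $\mu(\phi_{\delta/n}(A)) = \mu(A) > 0$, whence $1 = \mu(S^*M) \geq \sum_{n \geq 2} \mu(A) = \infty$, absurd. Writing $\{Q \neq 0\} \cap S^*H$ as $\bigcup_n \{|Q| > 1/n\} \cap S^*H$ and exhausting by compacts, I conclude $\supp(\mu|_{S^*H}) \subset \{Q = 0\}$. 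I expect this disjointness-and-pigeonhole step to be the main hurdle: the transversality argument of the first bullet does not apply along $S^*H$, and one must convert the pointwise second-order recoil into a robust measure-theoretic emptiness statement.

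The third bullet is a direct Fermi-coordinate calculation: at $v \in S^*H$ one has
\begin{equation*}
H_p(v) \;=\; 2 g_H^{ij}(x', 0)\xi_j \partial_{x_i} - \partial_{x_k}|\xi'|^2_{g_H(x', 0)} \partial_{\xi_k} \;-\; Q(0, x', \xi')\, \partial_{\xi_n},
\end{equation*}
since $2\xi_n = 0$ kills the $\partial_{x_n}$-term and the $\partial_{\xi_n}$-coefficient is $-\partial_{x_n}|\xi'|^2_{g_H(x', x_n)}|_{x_n = 0} = -Q$. The first two summands constitute $H_{p|_{S^*H}}$ for the induced metric $p|_{S^*H}(x', \xi') = |\xi'|^2_{g_H(x', 0)}$, so $H_p = H_{p|_{S^*H}}$ on $S^*H$ precisely on $\{Q = 0\}$, which together with the second bullet gives the final claim.
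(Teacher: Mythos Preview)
Your proof is correct. The overall strategy---showing that an invariant probability measure cannot charge a set that the flow moves off itself for a definite positive time---matches the paper's, but the execution differs in two places worth noting.

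For the first bullet, you use a flow-box argument (the local product structure $\mu = \nu \otimes dt$ near a transverse cross-section), whereas the paper folds both bullets into a single time-averaging lemma: for $A \subset S^*_H M$ with first-return time to $S^*_H M$ uniformly bounded below, invariance gives $\mu(A) = \tfrac{1}{T}\int_0^T \mu(\phi_{-t}(A))\,dt$, and one then bounds the occupation time $\int_0^T 1_A(\phi_t(\rho))\,dt$ using that the set of hitting times is discrete. For the second bullet, your countable-disjoint-translates pigeonhole is a clean discrete substitute for that averaging; the paper instead proves the stronger statement that $\mu|_{S^*_H M}$ vanishes on the set $\{T_{S^*H} > 0\}$ where the forward orbit takes positive time to reach $S^*H$, and then observes that $Q \neq 0$---or indeed any finite-order nondegeneracy $H_p^k x_n \neq 0$---forces positive return time. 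So the paper's lemma simultaneously handles all finite-order contact, while your argument is tailored to second order, which is exactly what the theorem as stated requires. (Minor: your formulas for $H_p^2(x_n)$ and for the $\partial_{\xi_n}$-coefficient of $H_p$ are off by a factor of $2$ relative to the paper's convention $\partial_{x_n}|\xi'|^2_{g_H}\big|_{x_n=0} = 2Q$, but this is harmless for the argument.)
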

\noindent In fact, we will see in Section~\ref{MDSECT} that such measures have $\mu|_{S^*H}$ supported where $H_p$ is tangent to $H$ to infinite order. 

  The following immediate corollary was mentioned in \cite{TZ17}.
\begin{corollary} If the second fundamental form of $H$ is non-degenerate, $\mu |_{S^*_H M} = 0$. \end{corollary}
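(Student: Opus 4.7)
The plan is to read off the corollary directly from the three bullet points of Theorem~\ref{t:invMeas}, so the proof is essentially a one-line deduction once that theorem is in hand.

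First I would invoke the first bullet of Theorem~\ref{t:invMeas} to replace the object of interest: $\mu|_{S^*_H M}=\mu|_{S^*H}$. This identifies the measure on the (larger) hypersurface bundle $S^*_H M = \{(x,\xi)\in S^*M : x\in H\}$ with its restriction to the unit cosphere bundle $S^*H$ of the induced metric, i.e.\ to covectors tangent to $H$ of unit length.

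Next I would apply the second bullet: the support of $\mu|_{S^*H}$ is contained in the zero set of the scalar second fundamental form, namely $\{(x',\xi')\in S^*H : Q(0,x',\xi')=0\}$, where $Q(0,\cdot,\cdot)$ is the quadratic form on $T^*_{x'}H$ encoding the second fundamental form of $H$ in Fermi coordinates. Under the hypothesis that the second fundamental form is \emph{non-degenerate}, the associated quadratic form $\xi'\mapsto Q(0,x',\xi')$ has only the trivial null vector $\xi'=0$ at each $x'\in H$. Therefore the support condition becomes $\xi'=0$ on the support of $\mu|_{S^*H}$.

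But $S^*H$ consists exactly of the covectors with $|\xi'|_{g_H}=1$, and in particular excludes the zero section. Consequently the support of $\mu|_{S^*H}$ must be empty, so $\mu|_{S^*H}=0$, and combined with the first step this gives $\mu|_{S^*_H M}=0$, as claimed. There is no real obstacle here: the work is entirely contained in Theorem~\ref{t:invMeas} itself, and the corollary is just the observation that a non-degenerate quadratic form cannot vanish on the unit sphere.
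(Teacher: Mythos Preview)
Your proposal is correct and is exactly the immediate deduction the paper has in mind; the paper gives no separate proof beyond calling the corollary ``immediate,'' and your argument---using the first two bullets of Theorem~\ref{t:invMeas} together with the fact that a non-degenerate quadratic form has no nontrivial zeros on $S^*H$---is precisely that immediate step.
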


Finally, we obtain an expression for $\mu\in \mathcal{Q}^*(RCD)$. 
\begin{theorem}
\label{t:RCD}
Suppose that $\phi_h$ has defect measure $\mu$ and that its renormalized Cauchy data has defect measure $\mu^{RCD}\in \mathcal{Q}^*(RCD)$. Then, for $a\in C(T^*H)$, 
$$
\mu^{RCD}(a)=\int_{S^*_HM\setminus S^*H}a(\pi(\zeta))|\xi_n(\zeta)|d\nu^{\perp}(\zeta)
$$
where for $A\subset S^*_HM\setminus S^*H$,
$$
\nu^{\perp}(A):=\lim_{T\to 0^+}\frac{1}{2T}\mu\Big(\bigcup_{|t|\leq T}\varphi_t(A)\Big),
$$
and $\pi:S^*_HM\to B^*H$ denotes the orthogonal projection.
\end{theorem}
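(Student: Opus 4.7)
The plan is to express $\mu^{RCD}(a)$ as an interior matrix element via a Rellich-type commutator identity, then to pass to the limit and use the flow-invariance of $\mu$ to identify the transverse measure $\nu^\perp$ together with the weight $|\xi_n|$.

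First, I reduce to test symbols $a \in C_c^\infty(B^*H \setminus S^*H)$ supported away from the glancing set. Both sides of the claimed identity vanish on $S^*H$: the right-hand side because $|\xi_n|=0$ there, and the left-hand side because the symbol $1-|\xi'|^2$ of $1+h^2\Delta_H$ vanishes on $S^*H$ and damps the whispering gallery contribution to $\mu^{RD}$ (a parallel argument, via Theorem~\ref{t:invMeas}, handles $\mu^N$). So the general case $a \in C(T^*H)$ follows by approximation. Working in Fermi coordinates $(x',x_n)$ near $H$, where $-h^2\Delta_g = (hD_{x_n})^2 + R(x_n,x',hD_{x'}) + h\cdot(\text{lower order})$ with $R(0,x',\xi')=|\xi'|^2_{g_H}$, I choose a compactly supported lift $\tilde a(x,\xi)$ of $a$ to $T^*M$, supported in a thin neighborhood of the two sheets $\xi_n = \pm\sqrt{1-|\xi'|^2}$ of $S^*_HM \setminus S^*H$ above $\supp a$, with a Heaviside-type jump across $\{x_n=0\}$ (smoothed by a parameter $\delta\to 0$).

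Next, I apply the Rellich identity of~\cite{CTZ13} in the form $\tfrac{i}{h}\langle[P,A]\phi_{h_j},\phi_{h_j}\rangle = 0$ with $P = -h^2\Delta_g - 1$ and $A$ a self-adjoint quantization of $\tilde a$. As $\delta \to 0$, the boundary terms at $H$ assemble into $\mu^N_j(a) + \mu^{RD}_j(a) = \mu^{RCD}_j(a)$, while the interior terms yield a matrix element with symbol $H_p\tilde a$. Passing to the limit $h_j\to 0$ along the sequence defining $\mu$, the interior matrix element tends to $\int_{T^*M} H_p\tilde a\,d\mu$. Since $\mu$ is flow-invariant, the smooth part of $H_p\tilde a$ integrates to zero, leaving only the contribution from the jump at $H$. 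Disintegrating $d\mu = dt\cdot d\nu^\perp$ in a flow-box around $H$ (possible on $S^*_HM \setminus S^*H$ since the flow is transverse there) and using $dx_n = 2\xi_n\,dt$ on $S^*M$ (since $p = |\xi|^2$), the jump contribution reduces to $\int a(\pi(\zeta))|\xi_n(\zeta)|\,d\nu^\perp(\zeta)$. The weight $|\xi_n|$ emerges because the renormalized Cauchy data contributes a factor $\xi_n^2$ (using $1-|\xi'|^2=\xi_n^2$ on $S^*_HM$) that is balanced against the Jacobian $1/(2|\xi_n|)$ from the change of variables $dt\to dx_n$.

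The main obstacle is the careful handling of the Heaviside cutoff, since $\Theta(x_n)Op_h(a)$ is not a standard semiclassical pseudodifferential operator: one must smooth it to $\chi_\delta(x_n)$, apply the commutator calculus, and control the interchange of the limits $\delta\to 0$ and $h\to 0$. A subsidiary issue is verifying that the boundary terms from the Rellich identity assemble precisely into the symmetric combination $\mu^N + \mu^{RD}$ rather than a mixed Neumann-Dirichlet cross term, which requires choosing $A$ symmetric; this is exactly what makes the renormalized Cauchy data especially well-suited to this approach.
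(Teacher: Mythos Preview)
Your overall plan---Rellich identity, flow-box disintegration $d\mu=dt\,d\nu^\perp$, and the change of variables via $\dot x_n=2\xi_n$---matches the paper's. But the step linking the commutator to $\mu^{RCD}$ is not right as written. You invoke $\tfrac{i}{h}\langle[P,A]\phi,\phi\rangle=0$ on the closed manifold; with a smooth $A$ that identity is just invariance of $\mu$ and yields no Cauchy data. The paper instead applies Green's formula over $M_+=\{x_n>0\}$, so that boundary integrals in $\phi|_H$ and $hD_\nu\phi|_H$ appear explicitly. More importantly, what makes those boundary terms assemble into $\mu^N+\mu^{RD}$ is not symmetry of $A$ but the specific choice $A=\chi(x_n/\epsilon)\,hD_{x_n}\,a^w(x',hD')$: the $hD_{x_n}$ factor turns one boundary term into $\langle a^w hD_\nu\phi,hD_\nu\phi\rangle$, and in the other it produces $(hD_{x_n})^2\phi$, which the eigenfunction equation converts to $(1+h^2\Delta_H)\phi|_H$. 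Your proposed $A$ (a lift of $a$ near the two sheets of $S^*_HM$, times a smoothed Heaviside) lacks this normal-derivative structure; a tangential $A$ yields cross terms $2\Re\langle a^w hD_\nu\phi,\phi\rangle$, not $\mu^N+\mu^{RD}$.

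A second gap: your reduction to $a$ supported away from $S^*H$ presupposes that $\mu^N$ puts no mass on $S^*H$, which is part of the conclusion. The paper does not make this reduction; it keeps the tangential contribution as the error $II_0(a,\epsilon,\mu)=\bigl(\int\chi^2P_2^2\,d\mu\bigr)^{1/2}$, which by dominated convergence tends to $\int Q^2(0,x',\xi')a^2\,d\mu|_{S^*H}$ and vanishes because Theorem~\ref{t:invMeas} forces $\mu|_{S^*H}$ to be supported where the second fundamental form $Q$ vanishes. That is where Theorem~\ref{t:invMeas} actually enters. Relatedly, your claim that ``the smooth part of $H_p\tilde a$ integrates to zero by invariance'' fails once the integral is over $M_+$ rather than $M$; the residual term (the paper's $II_0$) is genuinely present and must be killed by this second-fundamental-form mechanism, not by invariance.
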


\subsection{Outline of the article} Section~\ref{LAPSECT} contains some preliminary facts about the Laplacian. This is followed by Section~\ref{RELLICHSECT} which reviews the basic calculations for Rellich's formula. Section~\ref{MDSECT} then contains the study of invariant measures and in particular, the proof of Theorem~\ref{t:invMeas}. Section~\ref{FINALSECT} contains the proof of theorem~\ref{t:RCD} and the proof of Theorem~\ref{thm1} to from Theorem~\ref{t:RCD}. Finally, Section~\ref{HYPERBOLIC} contains the proof of Theorem~\ref{t:CD} via a factorization method.

\ \\

\noindent {\sc Acknowledgements.} This work was largely written   during the period when both authors were research members at the Mathematical Sciences Research Insittute.  S.Z. would like to acknowledge support under NSF grant  DMS-1810747.

\section{\label{LAPSECT} Laplacian in Fermi normal coordinates along a hypersurface}

Let $(M, g)$ be any Riemannian manifold. We recall that in any coordinate system, $$\Delta_g = \frac{1}{\sqrt{g}} \sum_{i,j} \frac{\partial}{\partial x_i} (g^{ij} \sqrt{g} \frac{\partial}{\partial x_j}), $$
where $g = \det (g_{ij})$. Here $g_{ij} = g(\frac{\partial}{\partial x_i}, \frac{\partial}{\partial x_j})$ and $g^{ij}$ is the inverse matrix.

Let $H \subset M$ be a hypersurface, and let
  $x =(x_1,...,x_{n-1},x_n)= (x',x_n) $ be Fermi normal coordinates in a small
tubular neighbourhood $H(\epsilon)$ of $H $ defined near a point
$x_0 \in H$.  Thus, $H = \{ x_1 = 0\}$ with  coordinates $x' $ on $H$. Fermi coordinates use the charts $\exp_{x'} x_n \nu$ where $\nu$ is a choice of unit normal field. In these coordinates we can locally write
\begin{equation} \label{Hepsilon} H(\epsilon) := \{ (x',x_n) \in U \times {\mathbb R}, \, | x_{n} | < \epsilon \}. \end{equation}
Here $U \subset {\mathbb R}^{n-1}$ is a coordinate chart
containing $x_0 \in H$ and $\epsilon >0$ is small but
for the moment, fixed.  We also denote by $\xi_n, \xi'$ the symplectically dual coordinates
on $T^* H(\epsilon).$

  In Fermi  normal coordinates, the metric is given by 
  $$
  g= g_H(x, dx')+dx_n^2.
  $$
  where $g_H(x,dx')$ is a metric in $x'$ depending on $x_n$. In particular, the metric induced on $H$ is $g_H(0,x',dx')$.
  Therefore, 
  
   \begin{equation} \label{LAPF} \begin{aligned}
   -h^2 \Delta_g&= \frac{1}{\sqrt{g(x)} }(hD_{x_n}\sqrt{g(x)} hD_{x_n} +hD_{x_i}g_H^{ij}(x)\sqrt{g(x)} hD_{x_j})\\
& =    \frac{1}{\sqrt{g(x)}} hD_{x_n} \sqrt{g(x) } hD_{x_n}   + R(h, x', x_n,h D_{x'}) \\
& =    (hD_{x_n})^2    + R(h, x',x_n, h D_{x'}) +hr_{1n}(x,hD_{x_n})
\end{aligned} 
\end{equation}
    where $R$ is a second-order $h$-differential operator along $H$ with coefficients depending on $x_n$,  $r_{1n}$ is a first
order normal operator, and
    $$
    R=R_2(x',x_n,hD_{x'})+hr_1(x',x_n,hD_{x'})
    $$
where $r_1(x_n,x',hD_{x'})$ is a first order operator along $H$ with coefficients depending on $x_n$, and
   \begin{equation} \label{R2} \begin{aligned}
 R_2(x',x_n,hD_{x'}) 
   & =  R_2(x',0, h D_{x'}) + 2 x_n Q( x',x_n, h D_{x'}),  
           \end{aligned} \end{equation}
         Here, $Q( x',0, \xi')$ is the second fundamental form of $H$ and  $ R_2(x',0, hD_{x'}) = -h^2 \Delta_H$  (the
induced tangential semiclassical Laplacian on $H$).
    The semi-classical principal symbol  $ \sigma(- h ^2 \Delta_g)$ of $-h^2 \Delta_g $ is given by
\begin{equation} \label{SYMBOL}  p(x, \xi)  = \xi_n^2 + |\xi'|_{g_H}^2  . \end{equation}
We recall that the second fundamental form $II(X, Y)$ of a hypersurface $H$  is the symmetric tensor on
$TH$  defined by $II(X, Y) = \nabla^M_X Y - \nabla^H_X Y$ where
$\nabla^M$ is the covariant derivative for $(M,g)$ and $\nabla^H$ is the
covariant derivative for $(H, g|_H)$. The second fundamental form defines
a quadratic form on $T_{x'} H$ for every $x' \in H$. Hence it is given by
a quadratic polynomial $Q(0, x', \xi')$ in $\xi'$ at each $x'$. 

   The first order terms $r_1, r_{1n}$ play no role in the calculations of this paper since they only contribute to the  $O(h)$ remainder.

\subsection{\label{FSURFACE} Calculations on a surface}

Although $x_n$ denotes the $n$th Fermi coordinate in dimension $n$, we often use the same notation on a surface when we want to have uniform
notation in all dimensions, 
since $x_n$ also indicates `normal coordinate'.   In the case of a curve $H $ in a surface $M$, we use the notation $(s,y) = (x_1, x_n)$ for the  Fermi normal coordinates  along 
$H$, with
$x_n = y, x' = s$  (where $s$ is arc-length);   the symplectically dual coordinates are denoted by $(\sigma, \eta)$. The metric components 
are given  by $g^{00} = g(\frac{\partial}{\partial s}, \frac{\partial}{\partial s}),
g^{11} = g(\frac{\partial}{\partial y}, \frac{\partial}{\partial y})$ and $g^{01} = g^{10} =
 g(\frac{\partial}{\partial y}, \frac{\partial}{\partial s}) \equiv 0$.  Since the vector fields $\frac{\partial}{\partial y}$ are tangent to unit speed geodesics,
$g^{11} \equiv 1$ and  in particular, $\partial_{x_n} g^{11} = 0$. The Taylor expansion of $g^{00}(s,y)$ around   $y=0$ has the form,  $$g^{00}(s,y) =1+ 2 y \kappa_{\nu}(s) +  C_1 \tau(s) y^2 
+O(y^3), $$  where $\kappa_{\nu}(s)$ is the geodesic curvature of $H$, and where  $\tau$ is the scalar curvature of $(M,g)$.

\section{\label{RELLICHSECT} Rellich identity }

The result of Theorem \ref{thm1} is local on $H$, and with no loss of generality we may assume that
 $H$ is 
the boundary of a smooth open domain
 $M_+ \subset M$, $H = \partial M_+$, and $x_n>0$ in $M_+$.    We then use a Rellich identity to write the integral of a commutator over
$M_+$ as a sum of integrals over the boundary (of course the same
argument would apply on $M_- = M \setminus M_+$).  We follow the exposition of \cite{CTZ13} in the following and continue
to use the notation $\phi_h$ for a sequence of eigenfunctions and allow $H \subset M$ to be a hypersurface in a manifold of
any dimension.

Let $A(x, h D_x) \in \Psi_{sc}^{0}(M)$ be an order zero semiclassical pseudodifferential operator on $M$ (see \cite{Zw}). 
Also denote by $\gamma_H$ the restriction operator $\gamma_H f = f |_H$. If  $\phi_h$ is  a Laplace eigenfunction of eigenvalue $-h^{-2}$, then by   Green's formula, 
\begin{align}
\label{rellich}
 {-} \frac{i}{h} \int_{M_+}  & \left( [-h^2 \Delta_{g},  \, A(x, h D_{x}) ] \, \phi_{h}(x)  \right) \overline{\phi_{h}(x)} \,  dx \\
= & \int_H \left(\gamma_H (h D_{\nu}  \, A(x',x_n,h D_x)
 \phi_{h}\right)  \overline{\phi_{h}}_H  \,  d\sigma_{H} \notag \\
& +   \int_{H}      \left(\, A(x',x_n,h D_x) \,
  \gamma_h \phi_{h} \right) (\gamma_H \overline{hD_{\nu}
    \phi_{h}})   \, d\sigma_{H} .\notag
\end{align}
Here, $D_{x_j} = \frac{1}{i} \frac{\partial}{\partial x_j}$,   $D_{x'}=(D_{x_1},...,D_{x_{n-1}}),$ \,  $D_{\nu} = \frac{1}{i} \partial_{\nu}$ where $\partial_{\nu}$
is the interior unit normal to $M_+$.  Henceforth we often abbreviate $D_{x'}$ by $D'$. Also, $d \sigma_H$ is the surface measure on $H$.

Let $Op_h(a) = a^w(x', h D_{x'})$ be a semi-classical pseudo-differential operator on $L^2(H)$. 
We  wish to choose $A(x, h D_x)$ to so that $A(x', 0, h D_x)$ is close to $a^w(x', h D_{x'})$ and so that  $[-h^2 \Delta_g, A(x, hD_x)]$  has good positivity properties when $a^w(x', h D_{x'}) \geq 0. $ 

We let $\chi \in C^{\infty}_{0}({\mathbb
R})$ be a cutoff with $\chi(x) = 0$ for $|x| \geq 1$ and $\chi(x)
= 1 $ for $|x| \leq 1/2.$
Given $a \in S^{0,0}(T^*H \times (0,h_0]),$ we define the pseudo-differential operator $A$ on $M$ by, \begin{equation}
\label{ADEF} A(x',x_n,h D_x) = \chi ( \frac{x_n}{\epsilon}) \,h
D_{x_{n}}a^w(x', hD'). \end{equation} We now calculate the two sides of \eqref{rellich} following \cite{CTZ13}, in particular  showing that matrix elements of the commutator $[-h^2
\Delta_g, A(x, hD_x)]$  of this `extension' of $a^w(x', D_{x'})$ with $-h^2 \Delta_g$ have good positivity properties. Of course, $A$ is not truly an extension because it  is not totally characteristic, i.e. it also contains normal derivatives $hD_{x_n}$. 
\subsubsection{The right hand side}
Since $\chi(0)=1$, the second term  $$   \int_{H}   \gamma_H    \left(\, A(x',x_n,h D_x) \,
\phi_{h} \right) (\gamma_H \overline{hD_{\nu}
    \phi_{h}})   \, d\sigma_{H} = \int_H \gamma_{H}  \chi ( \frac{x_n}{\epsilon}) \,h
D_{x_{n}}a^w(x', hD') \phi_h) \overline{hD_{\nu}
    \phi_{h}} d \sigma_H$$ on the right side  of
(\ref{rellich}) is the Neumann data matrix element,
\begin{equation} \label{rellich2}
 \lll a^w(x', hD')h D_{x_n} \phi_h |_H, h D_{x_n} \phi_h |_H \rrr. \end{equation}

We now show that the first term on the right hand side   of (\ref{rellich})  is the renormalized Dirichlet data. 
Using that  $\chi'(0) = 0$ and $-h^2\Delta_g \phi_h = \phi_h$, the first term equals
\begin{equation}
 \label{rellich3}
\begin{aligned}
  \int_{H} &   \gamma_{H}  \left( h D_{n} (\chi(x_n/\epsilon)h D_n a^w(x',
  hD')\phi_{h} \right) ( \gamma_{H} \overline{ \phi_{h}})  \,
  d\sigma_{H} \\
= & \int_H \gamma_H \chi(x_n/\epsilon) a^w(x', hD') (h D_n)^2  \phi_h \gamma_H \overline{\phi_h}d\sigma_H\\
&\qquad +
\int_H \frac{h}{i \epsilon} \chi'(x_n/\epsilon)hD_n a^w(x', hD') \phi_h
\Big) \gamma_H \overline{\phi_h}d \sigma_H  \\
= & \int_H \gamma_H ( \chi(x_n/\epsilon) a^w(x', hD') (1 - R(x_n, x', hD'))
\phi_h ) \gamma_H \overline{\phi_h} d \sigma_H \\
&\qquad+ O_{\epsilon}(h)(\|\gamma_H \phi_h\|^2 +\|\gamma_H hD_{x_n}\phi_h\|^2)\\
= & \int_H  a^w(x', hD') (1 - h^2 \Delta_H)
\gamma_H \phi_h \cdot \gamma_H \overline{\phi_h}d \sigma_H\\
&\qquad+ O(h)(\|\gamma_H \phi_h\|^2 +\|\gamma_H hD_{x_n}\phi_h\|^2) .
\end{aligned}
\end{equation}
In the last line we use that $\chi(0) = 1$ and the expansions \eqref{LAPF}-\eqref{R2} together with the fact that, since $\phi_h$ is a Laplace eigenfunction 
$$\|\gamma_H r_{1}(x',0,hD_{x;})\phi_h\|_{L^2(H)}\leq C\|\gamma_H\phi_h\|_{L^2}.$$

\subsubsection{Left hand side of \eqref{rellich}} Since the semi-classical  principal symbol of $\tfrac{i}{h} [-h^2 \Delta_{g},  \, A(x, h D_{x}) ] $
equals the Poisson bracket $\{\xi_{n}^{2} +
      R_2(x_n,x',\xi') , \chi(\frac{x_n}{\epsilon})  \xi_{n}a(x', \xi')  \}$, we have

\begin{equation} \label{rellich4} \begin{aligned}  &- \frac{i}{h} \int_{M_+}   \left( [-h^2 \Delta_{g},  \, A(x, h D_{x}) ] \, \phi_{h}(x)  \right) \overline{\phi_{h}(x)} \,  dx\\ 
& \qquad\qquad=  - \left\langle  \left( \left\{
       \xi_{n}^{2} +
      R_2(x',x_n,\xi') , \, \chi(\frac{x_n}{\epsilon})  \xi_{n}a(x', \xi')  \right\}  \right)^w\phi_{h}, \,\,
  \phi_{h} \right\rangle_{L^2(M_+)}+ {\mathcal
  O}_{\epsilon}(h).
\end{aligned} \end{equation}

\subsection{\label{PBSECT} Some Poisson bracket calculations}

 Since $\xi_n^2$ is only non-trivially paired with $x_n$,
\begin{equation} \label{COMM} \begin{aligned}  &-\left\{
       \xi_{n}^{2} +
      R_2(x',x_n,\xi') , \, \chi(\tfrac{x_n}{\epsilon})  \xi_{n}a(x',
      \xi')  \right\} \\&\qquad\qquad\qquad =  - \tfrac{2}{\epsilon} \chi'(\tfrac{x_n}{\epsilon})
    \xi_{n}^{2}a(x', \xi')  + \chi(\tfrac{x_n}{\epsilon})  P_2(x',x_n,\xi', \xi_n)
    , \end{aligned}  \end{equation}
where $P_2 =- \{R_2(x',x_n, \xi'),  \xi_{n}a(x',
      \xi')   \} $.    In general dimensions, \begin{equation} \label{P2DEF} \begin{array}{l} P_2   = \frac{\partial R_2(x',x_n, \xi')}{\partial_{x_n}} a(x', \xi') -  \xi_n \{R_2(x_n, x', \xi), a(x', \xi')\}. \end{array} \end{equation}
      When restricted to $S^*H$ the second term is zero and one gets
      $$  \frac{\partial R_2(x_n, x', \xi')}{\partial_{x_n}} |_{x_n = 0}\;\;a(x', \xi') =  2Q(0, x', \xi') a(x', \xi'). $$

 In the case of a curve $H$ and in Fermi normal coordinates, 
       \begin{equation} \label{PH} P_2(s, 0, \sigma, \eta) =  2   \kappa_{\nu}(s) a(s, \sigma) - 2 \sigma  \eta   \frac{\partial ( a(s,\sigma) )}{\partial s}. \end{equation}
             The first  term vanishes when $y = 0$, since $g^{00}(s,0) = 1$, while $ \frac{\partial  g^{00}(s,y)  \sigma^2}{\partial y} a(s,\sigma) = 2 \kappa_{\nu}(s) a(s, \sigma). $
      Hence,
    The second term
      vanishes when $\eta = 0$, i.e. on $S^*H$.

\subsection{Semi-classical limit of the Rellich formula}

We consider  any sequence $\{\phi_h\}$ with a single microlocal defect  measure $\mu \in \qcal^*$.  {
It will be convenient to extend some integrals from $M_+$ to $M$. For this, we introduce a cutoff
$\tchi\in \Ci(M)$ such 
that 
\begin{equation}
\label{d:tchi}
\begin{gathered}
\chi'(x_n/\epsilon)|_{M_+} = \tchi'(x_n/\epsilon),\qquad\qquad\supp (1-\tchi)\cap x_n\geq 0 =\emptyset
\end{gathered} 
\end{equation}}
Also recall that $P_2$ is defined in \eqref{P2DEF}.
  
 \begin{proposition}\label{MDMPROP} Let $(M, g)$ be a compact Riemannian manifold and  let $H \subset M$ be a smooth, embedded, orientable hypersurface. 
  Then, \smallskip
 \begin{equation} \label{IandIIINTRO}\begin{aligned}& \left|\lim_{ h  \to 0} \left(\begin{aligned}&\lll Op_h(a)  h D_\nu \phi_h |_{H} , h D_\nu \phi_h |_H \rrr_{L^2(H)} \\&\qquad+ \lll Op_h(a)  (1
+ h^2 \Delta_H) \phi_h |_{H}, \phi_h |_{H} \rrr_{L^2(H)} \end{aligned} \right)-  I_0(a, \epsilon, \mu) \right|\\ \smallskip
&\qquad\qquad\qquad\qquad\qquad\qquad\qquad\qquad\qquad\qquad\qquad\qquad\leq  II_0(a, \epsilon, \mu). \end{aligned} \end{equation} 
where
    \begin{equation} \label{I-IIDEF} \left\{ \begin{array}{l} I_0(a, \epsilon, \mu): = {-} 2 \int_{S^*M} \frac{1}{\epsilon}
     \tchi'(\frac{x_n}{\epsilon}) \, \xi_n^2 a(x',\xi')  d\mu, \\ \\
II_0(a, \e, \mu): =\sqrt{\int \chi(\frac{x_n}{\e})^2P_2^2(x',x_n,\xi')d\mu}.
 . \end{array} \right. \end{equation}
\end{proposition}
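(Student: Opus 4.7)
The plan is to apply the Rellich identity \eqref{rellich} with the operator $A$ from \eqref{ADEF} and then pass to the semiclassical limit using the defect measure $\mu$ (a probability measure on $S^*M$). The right-hand side of \eqref{rellich} was already computed in \eqref{rellich2} and \eqref{rellich3} to equal the sum of the Neumann and renormalized Dirichlet matrix elements appearing in \eqref{IandIIINTRO}, up to $O_\epsilon(h)$ errors that vanish in the limit by standard trace estimates on Cauchy data. The left-hand side of \eqref{rellich} is, by \eqref{rellich4}, the $L^2(M_+)$ matrix element of the Weyl quantization of the Poisson bracket $-\{p,\chi(x_n/\epsilon)\xi_n a\}$, modulo $O_\epsilon(h)$.

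By \eqref{COMM} this symbol equals
\[
-\frac{2}{\epsilon}\chi'(x_n/\epsilon)\xi_n^2 a(x',\xi') + \chi(x_n/\epsilon) P_2(x',x_n,\xi',\xi_n),
\]
splitting the LHS of \eqref{rellich} into a $\chi'$-piece and a $\chi P_2$-piece which I would treat separately. For the $\chi'$-piece, I would convert the $L^2(M_+)$ matrix element into an $L^2(M)$ matrix element using the extension $\tilde\chi$ from \eqref{d:tchi}, whose support conditions are designed so that $\tilde\chi'(x_n/\epsilon)$ agrees with $\chi'(x_n/\epsilon)$ on $M_+$ and is supported there, producing no spurious contribution off $M_+$. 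Taking $h\to 0$ and integrating the principal symbol against $\mu$, this piece converges to exactly $I_0(a,\epsilon,\mu)$.

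For the $\chi P_2$-piece, rather than attempting any extension, I would apply operator Cauchy--Schwarz directly to the $L^2(M_+)$ matrix element:
\[
\big| \langle (\chi P_2)^w \phi_h, \phi_h\rangle_{L^2(M_+)} \big| \leq \|(\chi P_2)^w \phi_h\|_{L^2(M)} \, \|\phi_h\|_{L^2(M)},
\]
and then use that by symbolic calculus $((\chi P_2)^w)^*(\chi P_2)^w = ((\chi P_2)^2)^w + O_{L^2\to L^2}(h)$, so that $\|(\chi P_2)^w \phi_h\|_{L^2(M)}^2$ converges to $\int \chi(x_n/\epsilon)^2 P_2^2 \, d\mu = II_0(a,\epsilon,\mu)^2$ as $h\to 0$. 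Combining the two pieces with the identification from the first paragraph establishes \eqref{IandIIINTRO}. The main obstacle is the bookkeeping for the $\tilde\chi$-extension in the $\chi'$-piece: one must verify that passing from the $L^2(M_+)$ inner product to the $L^2(M)$ inner product yields $I_0$ exactly with no surviving boundary contribution, which is precisely what the support conditions in \eqref{d:tchi} are engineered to guarantee. Beyond this, the argument is a routine application of semiclassical symbolic calculus and weak$^*$ convergence of microlocal lifts.
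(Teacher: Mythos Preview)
Your proposal is correct and follows essentially the same route as the paper's own proof: split the commutator symbol via \eqref{COMM}, convert the $\chi'$-piece from $L^2(M_+)$ to $L^2(M)$ using the extension $\tilde\chi$ of \eqref{d:tchi} and pass to the defect measure to obtain $I_0$, and bound the $\chi P_2$-piece on $L^2(M_+)$ by Cauchy--Schwarz followed by the defect-measure limit of $\|(\chi P_2)^w\phi_h\|_{L^2(M)}^2$ to obtain $II_0^2$. The only point to keep explicit is that the $O_\epsilon(h)(\|\gamma_H\phi_h\|^2+\|\gamma_H hD_{x_n}\phi_h\|^2)$ remainder in \eqref{rellich3} indeed vanishes as $h\to 0$ by the standard restriction bounds you allude to.
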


\begin{proof}

The Rellich identity and the calculations  (\ref{rellich})-(\ref{rellich3})-\eqref{COMM} in Section \ref{PBSECT} show that, for any hypersurface  $H \subset M$, 
\begin{equation} \label{rellich6}\begin{aligned} 
 \begin{aligned} \langle &Op_h(a)  h D_\nu \phi_h |_{H} , h D_\nu \phi_h |_H \rangle_{L^2(H)} \\
  &
  + \langle Op_h(a)  (1
+ h^2 \Delta_H) \phi_h |_{H}, \phi_h |_{H} \rangle_{L^2(H)}\end{aligned}  &= I_{ h } (a, \epsilon)  + II_{ h }(a, \epsilon) +\mathcal{O}_{\epsilon}(h)
 \end{aligned}
 \end{equation}
 where
 \begin{equation}
 \begin{aligned}
 I_h&= - \left\langle  \left( \tfrac{2}{\epsilon} \chi'(\tfrac{x_n}{\epsilon})
    \xi_{n}^{2}a(x', \xi')\right)^w  \phi_{h}, \,\,
  \phi_{h} \right\rangle_{L^2(M_+)}\\
  II_h&=\left\langle \left( \chi(\tfrac{x_n}{\epsilon})  P_2(x',x_n,\xi', \xi_n)  \right)^w\phi_h,\phi_h\right\rangle_{L^2(M_+)}\end{aligned} \end{equation} 
and $P_2$ is given by \eqref{P2DEF} - \eqref{PH}.

Now, $\chi'(x_n/\epsilon)|_{M_+} = \tchi'(x_n/\epsilon)$ where $\tchi$ is as in~\eqref{d:tchi}. Therefore, 
Since  $\tchi'$ and $\chi'$ are supported inside $M_+$,
$$
   \left\langle    \left( \tfrac{1}{\epsilon}
     \chi'(\tfrac{x_n}{\epsilon}) \, \xi_n^2 a(x',\xi') \right)^w \phi_{h}, \,\,
   \phi_{h} \right\rangle_{L^2(M_+)} \\
 =  \left\langle  \left( \tfrac{1}{\epsilon}
     \tchi'(\tfrac{x_n}{\epsilon}) \, \xi_n^2 a(x',\xi') \right)^w \phi_{h}, \,\,
   \phi_{h} \right\rangle_{L^2(M)}. $$
   Sending $h\to 0$ in the right hand side yields $I_0(a,\e,\mu)$.
   
   Next, observe that by Cauchy-Schwarz,
   \begin{multline*}
   II_h(a,\e)=\left\langle \left(\chi(\tfrac{x_n}{\epsilon})  P_2(x',x_n,\xi')  \right)^w\phi_{h}, \,\,
  \phi_{h} \right\rangle_{L^2(M_+)}\\\leq \|\big(\chi(\tfrac{x_n}{\epsilon})  P_2(x',x_n,\xi')  \big)^w\phi_{h}\|_{L^2(M)}.
   \end{multline*}
   Then, 
   $$
   \lim_{h\to 0}\|\big(\chi(\tfrac{x_n}{\epsilon})  P_2(x',x_n,\xi')  \big)^w\phi_{h}\|_{L^2(M)}^2=\int \chi(\tfrac{x_n}{\e})^2P_2^2(x',x_n,\xi')d\mu.
   $$
   This completes the proof.

\end{proof}

 \section{\label{MDSECT} Decompositions of  microlocal defect measures}
 
 The next two sections are devoted to the calculation of the limits 
 $I_{0}(a, \epsilon, \mu)$  resp.  $II_{0}(a, \epsilon, \mu)$ (\eqref{I-IIDEF})  as $\epsilon \to 0$. 
We first make the decomposition  \begin{equation} \label{DECOMP} \mu = \mu |_{S^*_H M} + \mu^{\perp}, 
\;\; {\rm{ where}} \; \mu^{\perp}(S^*_H M) = 0, \end{equation}  
and where $ \mu |_{S^*_H M} = {\bf 1}_{S^*_H M} \mu$ is the restriction of $\mu$ to $S^*_H M$. Here, $S^*_H M$ is the set of unit co-vectors to $M$ with footpoint on $H$ and $S^*H \subset S^*_H M$ are those (co-)tangent to $H$. In this section, we first study the measure $\mu|_{S^*_HM}$, showing that it is supported in in $S^*H$ at points which are `nearly' totally geodesic (See Lemma~\ref{mushm=muh} and Corollary~\ref{c:squirrel}). We then calculate the limits of \eqref{I-IIDEF} in  Proposition \ref{I0PROP}.

\subsection{Disintegration of $\mu$ with respect to the geodesic flow}

We next briefly recall the theory of disintegration of measures along a fibration~\cite[Theorems 2.1.22, 4.1.17]{Du19}.

\begin{proposition}[Disintegration Theorem]
\label{t:disintegrate}
Suppose that $(Y,\mathcal{Y},\mu)$ is a probability space, $X$ is a Borel subset of a complete separable metric space, endowed with the Borel sigma algebra, and $\pi:Y\to X$ is measurable. Define $\nu:=\pi_*\mu$. Then there is a $\nu$ a.e. unique family of probability measure $\{\mu_x\}_{x\in X}$ on $Y$ such that 
\begin{itemize}
\item[(i)] for all Borel $A\subset Y$, $x\mapsto \mu_x(A)$ is measurable.
\item[(ii)] $\mu_x(Y\setminus \pi^{-1}(x))=0$
\item[(iii)] for any Borel measurable function $f: Y \to \R_+$,
\begin{equation}\label{DISINT} \int_Y f(y) d\mu(y) = \int_X \left( \int_{\pi^{-1}(x)} f(y) d\mu_x(y)\right) d\nu(x). \end{equation}
\end{itemize}
\end{proposition}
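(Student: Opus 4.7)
My plan is to treat this as the standard disintegration theorem, using the Radon--Nikodym theorem and the fact that Borel subsets of Polish spaces are standard Borel. Since $X$ is a Borel subset of a complete separable metric space, it is countably generated and isomorphic (as a measurable space) to a Borel subset of $[0,1]$; in particular there exists a countable algebra $\mathcal{A}_X \subset \mathcal{Y}_X := \mathcal{B}(X)$ generating $\mathcal{B}(X)$. The idea is to construct $\mu_x$ as a Radon--Nikodym derivative of the ``slice mass'' of $\mu$, and then upgrade pointwise-a.e. statements about single sets to a consistent family of probability measures via a countable generating algebra on $Y$ as well.

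First I would pass, by the standard Borel hypothesis, to a countable Boolean algebra $\mathcal{A}_Y$ generating the Borel $\sigma$-algebra on $\pi^{-1}(X) \subset Y$. For each $A \in \mathcal{A}_Y$, the set function $B \mapsto \mu(A \cap \pi^{-1}(B))$ defines a finite measure on $X$ which is absolutely continuous with respect to $\nu = \pi_*\mu$. Radon--Nikodym then yields a measurable density $g_A : X \to [0,1]$, unique up to $\nu$-null sets, with
\begin{equation*}
\mu(A \cap \pi^{-1}(B)) = \int_B g_A(x)\,d\nu(x), \qquad B \in \mathcal{B}(X).
\end{equation*}
Because $\mathcal{A}_Y$ is countable, I can pick versions of the $g_A$ so that outside a single $\nu$-null set $N \subset X$, the map $A \mapsto g_A(x)$ is finitely additive on $\mathcal{A}_Y$, satisfies $g_\emptyset(x) = 0$, $g_Y(x) = 1$, and enjoys countable additivity on $\mathcal{A}_Y$ (the latter reduces, again by countability, to monotone convergence $g_{A_n}(x) \downarrow 0$ for any decreasing sequence with empty intersection, which holds $\nu$-a.e. by dominated convergence applied to the defining integral identities).

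The main technical obstacle is precisely this passage from ``for each $A$, a.e. in $x$'' to ``for a.e. $x$, a consistent measure on all Borel sets.'' Once I have countable additivity on $\mathcal{A}_Y$ for every $x \notin N$, Carath\'eodory extension produces a probability measure $\mu_x$ on $\mathcal{B}(Y)$ for each such $x$, and I define $\mu_x$ arbitrarily (say, a fixed point mass in $\pi^{-1}(x)$ when $x \in \pi(Y)$) on the null set. Condition (i), measurability of $x \mapsto \mu_x(A)$, follows for $A \in \mathcal{A}_Y$ by construction and then extends to all Borel $A$ via a monotone-class argument, noting that the collection of $A$ for which measurability holds is closed under monotone limits.

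For (ii), I apply the identity defining $g_A$ with $A = \pi^{-1}(C)$ for $C \in \mathcal{A}_X$: this gives $g_{\pi^{-1}(C)}(x) = \mathbf{1}_C(x)$ for $\nu$-a.e.\ $x$, hence simultaneously for a.e.\ $x$ on the countable family $\mathcal{A}_X$. Since $\mathcal{A}_X$ separates points of $X$ (as $X$ is standard Borel), taking intersections over $C \ni x$ in $\mathcal{A}_X$ yields $\mu_x(\pi^{-1}(\{x\}^c)) = 0$, i.e.\ $\mu_x$ is concentrated on $\pi^{-1}(x)$. Finally (iii) holds by construction for $f = \mathbf{1}_A$, $A \in \mathcal{A}_Y$, and extends to all nonnegative Borel $f$ by the monotone convergence theorem and standard approximation by simple functions. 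The hard step throughout is the countable-additivity consistency on a full-measure set; everything else is routine measure theory once that is in hand.
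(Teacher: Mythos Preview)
The paper does not actually prove this proposition; it is quoted as background with a citation to Durrett \cite[Theorems 2.1.22, 4.1.17]{Du19}. Your outline is the standard Radon--Nikodym construction of regular conditional probabilities (build densities $g_A$ on a countable generating algebra, fix versions so that finite and countable additivity hold off a single $\nu$-null set, then extend by Carath\'eodory and monotone classes), and that is precisely the argument in Durrett, so there is nothing substantive to compare.

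One point does need correcting. You write that ``by the standard Borel hypothesis'' you obtain a countable generating algebra $\mathcal{A}_Y$ for $\mathcal{Y}$. But the stated hypothesis places the standard-Borel assumption on $X$, not on $Y$; as written, $(Y,\mathcal{Y},\mu)$ is an abstract probability space. Existence of a disintegration into genuine probability measures $\mu_x$ really does require $(Y,\mathcal{Y})$ to be nice (standard Borel, or at least countably generated with a compact approximating class), and Durrett's Theorem~4.1.17 explicitly assumes this. Without that hypothesis the key step---upgrading the finitely additive set function $A\mapsto g_A(x)$ on $\mathcal{A}_Y$ to a countably additive measure for $\nu$-a.e.\ $x$---can fail. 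So the gap is in the paper's transcription of the hypotheses rather than in your strategy; just make explicit that you are assuming $(Y,\mathcal{Y})$ standard Borel as well, rather than deriving it from the assumption on $X$.
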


In the case of interest, we fix $\delta>0$ small and define 
$$
Y=FL_\delta(S^*_HM):= \bigcup_{|t|\leq \delta}\exp(tH_p)(S^*_HM),
$$
and the map
$$
\pi_\delta:FL_\delta(S^*_HM)\to \mathfrak{G}_\delta:=Y/\sim
$$
where $\sim$ denotes the relation of belonging to the same orbit.
Then, let $\mu^{\delta}_x$ and $\nu^{\delta}$ be the measures guaranteed by Proposition~\ref{t:disintegrate}.

Note that the quotient space $\mathfrak{G}_\delta$ is not equal to $S^*_H M$; e.g. $H = \gamma$ is a closed geodesic, then $S^*\gamma$ is a single orbit and a single point in the quotient. There is, however, a large subset of $\mathfrak{G}_\delta$ which can be easily identified with $S^*_HM$. In particular, if an orbit in $\mathfrak{G}_\delta$ intersects $S^*_HM$ only once, we may identify this orbit with its intersection with $S^*_HM$.
\begin{lemma}
\label{l:inv}
Suppose that $\mu$ is invariant under $\exp(tH_p)$ and $\rho_0\in S^*_HM$ such that there is a neighborhood, $U$ of $\rho_0$ such that for $\rho\in U$
$$
\bigcup_{|t|\leq \delta}\exp(tH_p)(\rho)\cap S^*_HM=\rho.
$$ 
Then, identifying $\rho\in S^*_HM$ with its orbit in $\mathfrak{G}_\delta$, for all $\rho\in U$, using $[-\delta,\delta]\times U\ni t\mapsto \exp(tH_p)(\rho)\in FL_\delta(S^*_HM)$ as coordinates on their image, $\mu^{\delta}_\rho(t,\zeta) = \frac{1}{2\delta}1_{[-\delta,\delta]}dt\delta_{\rho}(\zeta)$ and in particular, for $A\subset U$ Borel, 
$$
\nu^{\delta}(A)=\mu\Big(\bigcup_{|t|\leq \delta}\exp(tH_p)(A)\Big).$$
\end{lemma}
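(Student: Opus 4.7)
The plan is to build flow-box coordinates on a neighborhood of $\rho_0$ that turn the geodesic flow into translation in one variable, and then combine the flow-invariance of $\mu$ with the uniqueness clause of the disintegration theorem (Proposition~\ref{t:disintegrate}) to read off both $\mu_\rho^\delta$ and $\nu^\delta$ in those coordinates.

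First I would set up the coordinates. Shrinking $U$ if necessary, I claim the map
\begin{equation*}
\Phi:[-\delta,\delta]\times U\to FL_\delta(S^*_HM),\qquad \Phi(t,\rho)=\exp(tH_p)(\rho),
\end{equation*}
is a Borel isomorphism onto its (open) image. Injectivity is the content of the hypothesis: if $\Phi(t_1,\rho_1)=\Phi(t_2,\rho_2)$ with $\rho_i\in U$, then $\exp((t_1-t_2)H_p)(\rho_1)=\rho_2\in S^*_HM$ with $|t_1-t_2|\le 2\delta$, so halving $\delta$ and applying the hypothesis at $\rho_1$ forces $t_1=t_2$ and then $\rho_1=\rho_2$. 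Moreover, by the same hypothesis the orbit segment through any $\rho\in U$ meets $S^*_HM$ only at $\rho$, so the equivalence class $\pi_\delta(\Phi(t,\rho))$ depends only on $\rho$. This lets me identify $\pi_\delta(U)\subset \mathfrak{G}_\delta$ with $U$ and identify each fiber $\pi_\delta^{-1}(\rho)$ with $\Phi([-\delta,\delta]\times\{\rho\})$.

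Next I would transport $\mu$. Pulling the restriction $\mu|_{\Phi([-\delta,\delta]\times U)}$ back through $\Phi$ produces a finite Borel measure $\tilde\mu$ on $[-\delta,\delta]\times U$. Under $\Phi$ the flow $\exp(sH_p)$ is the translation $(t,\rho)\mapsto(t+s,\rho)$ whenever both $t$ and $t+s$ lie in $[-\delta,\delta]$, so flow-invariance of $\mu$ becomes translation-invariance of $\tilde\mu$ in the first coordinate on the interior. Disintegrating $\tilde\mu$ with respect to the projection to $U$ and using that the only (locally) translation-invariant Borel measures on $(-\delta,\delta)$ are constant multiples of Lebesgue measure, I get
\begin{equation*}
\tilde\mu=dt\otimes \tilde\nu
\end{equation*}
for some finite Borel measure $\tilde\nu$ on $U$, and the conditional measure of $\tilde\mu$ on the fiber over $\rho$ is $1_{[-\delta,\delta]}(t)\,dt\,\delta_\rho$, whose total mass is $2\delta$.

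Finally, I invoke the uniqueness in Proposition~\ref{t:disintegrate} applied to $Y=FL_\delta(S^*_HM)$, $X=\mathfrak{G}_\delta$, $\pi=\pi_\delta$. Since $\mu_\rho^\delta$ is required to be a probability measure supported on $\pi_\delta^{-1}(\rho)$ and my computation yields a conditional measure of mass $2\delta$, renormalizing gives $\mu_\rho^\delta=(2\delta)^{-1}1_{[-\delta,\delta]}(t)\,dt\otimes\delta_\rho$ and $\nu^\delta|_U=2\delta\,\tilde\nu$ by uniqueness. For Borel $A\subset U$ this reads
\begin{equation*}
\nu^\delta(A)=2\delta\,\tilde\nu(A)=\tilde\mu([-\delta,\delta]\times A)=\mu\Big(\bigcup_{|t|\le\delta}\exp(tH_p)(A)\Big),
\end{equation*}
which is the claimed identity. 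The main obstacle is the first step: verifying that $U$ can be chosen so that $\Phi$ is a genuine Borel (in fact, topological) embedding, since it requires upgrading the pointwise hypothesis at $\rho_0$ to a uniform statement on a neighborhood and checking that translation-invariance of $\tilde\mu$ is preserved up to the boundary $t=\pm\delta$ — both are standard but must be handled carefully before the disintegration argument can be applied.
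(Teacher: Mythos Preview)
Your proposal is correct and follows essentially the same approach as the paper: set up flow-box coordinates $(t,\rho)$ on $FL_\delta(U)$, use flow invariance of $\mu$ to write $\mu=dt\otimes\nu^\perp$ there, and then invoke the uniqueness clause of the disintegration theorem to identify $\mu_\rho^\delta$ and $\nu^\delta$. The paper's proof is considerably more terse---it simply asserts ``the given coordinates are valid'' and ``$\mu=\nu^\perp(\zeta)\,dt$ for some $\nu^\perp$''---whereas you spell out the injectivity of $\Phi$ and the translation-invariance argument; your halving-of-$\delta$ fix for injectivity and your boundary concern are minor technical points that do not affect the argument.
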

\begin{proof}
First observe that the given coordinates are valid. Next, $\mu_\rho^\delta$ is clearly supported on $\pi^{-1}(\rho)$ and is Borel measurable. Therefore, we need only check that~\eqref{DISINT} holds with $f$ supported in 
$$
FL_{\delta}(U):=\bigcup_{|t|\leq \delta}\exp(tH_p)(U).
$$ 
For that, observe that on $FL_\delta(U)$, $\mu=\nu^{\perp}(\zeta) dt$ for some $\nu^\perp$. Therefore, for all $0<T\leq \delta$,
$$
\nu^\perp(A)=\frac{1}{2T}\mu\Big(\bigcup_{|t|\leq T}\exp(tH_p)(A)\Big)=\frac{1}{2\delta}\nu^\delta(A),
$$
and the lemma follows.
\end{proof}
For future use, we define 
\begin{equation}
\label{e:disintegrate}
\nu^\perp(A):=\frac{1}{2\delta}\nu^\delta(A)=\lim_{T\to 0^+}\frac{1}{2T}\mu\Big(\bigcup_{|t|\leq T}\exp(tH_p)(A)\Big).
\end{equation}

\subsection{Disintegration of $\mu$ with respect to the normal fibration}
It is also possible to disintegrate $\mu$ with respect to the Fermi   normal fibration over $H$. Let
$$
S^*H(\epsilon):=\{ (x',x_n,\xi)\in S^*M\mid |x_n|<\e\}.
$$Let $H_{\xi_n} =\frac{\partial}{\partial x_n}$ be the Hamilton vector field of $\xi_n$ on 
$|x_n|<\epsilon$. Its Hamilton flow is given by $\psi_t(x', x_n, \xi', \xi_n) = 
(x', x_n +t, \xi', \xi_n)$. In these coordinates $S^*_H M$ is defined by $x_n=0$ and the integral
curves of $\psi_t$ define a fibration over $S^*_H M$. Given $(x, \xi) \in S^*_x M$ with $x \in \tcal_{\delta}(H)$, parallel translate $\xi$ along the normal geodesic from $x$ to $H$. 
Denote the result by $P_x^{x'} \xi$. 
Define \begin{equation} \label{pidef1}\left\{\begin{array}{l}  \pi_{\delta}: S^* \tcal_{\delta}(H) \to S^*_H M, \;\;\pi_{\delta} (x', x_n, \xi)  := (x', P_x^{x'} \xi), \\ \\  
\mu^{\delta}_H = \pi_{\delta *}  d\mu |_{S^* H(\epsilon)}. \end{array} \right. \end{equation}
For $\delta$ very small, this map is well-approximated by the map,
 \begin{equation} \label{pidef2}  \begin{array}{l} \wt \pi_{\delta}: S^* H(\epsilon) \to T^*_H M, \;\; \wt \pi_{\delta} (x', x_n, \xi', \xi_n)  := (x', 0, \xi', \xi_n), \end{array}.\end{equation}
which however is not normalized so that the image lies in $S^*M$.

Applying Proposition~\ref{t:disintegrate} there exist finite fiber  measures
 $d\mu^{\epsilon}_{\rho}$  on the fiber of \eqref{pidef1} over $\rho \in S^*_H M$ such that
\begin{equation} \label{DISF}
\int_{S^* H(\epsilon)} f d\mu = \int_{S^*_H M} \left(\int_{\pi_{\e}^{-1}(\rho)} f  d\mu_{\rho}^{\epsilon} \right) d\mu^{\epsilon}_H. 
\end{equation}

The principal defect of this disintegration is that the Fermi normal fibration is not invariant under $\phi_t$, and thus the disintegrated measures are more difficult to compute. For instance, if $\mu = \delta_{\gamma}$ is a periodic orbit 
measure, the fiber measure $d\mu_{\zeta}$ is singular with respect to Lebesgue measure along the Fermi normal fibers,
and does not possess a derivative at $x_n = 0$. 

This type of fibration could be used in Section 5 for the proof of Lemma~\ref{l:calcII}, but we find it simpler to use the geodesic fibration.

\subsection{The behavior of $\mu |_{S^*_H M} $ and $ \mu |_{S^*_HM}$}

The purpose of this section is to prove Theorem~\ref{t:invMeas}


\begin{proof}
The proof consists of several Lemmas which yield stronger versions of the conclusions.

\begin{lemma}

\label{mushm=muh}
The measure $\mu|_{S^*_HM}$ satisfies
$$
\mu|_{S^*_HM}( \mathcal{T}_+)=0
$$
where
$$
\mathcal{T}_+:=\{\rho\in S^*_HM\mid T_{S^*H}(\rho)>0\},\qquad T_{S^*H}(\rho):=\inf\{ t>0\mid \phi_t(\rho)\in S^*H\}.
$$
Moreover, 
$$
\mu|_{S^*_HM}(A)\leq  \liminf_{T\to 0}  \frac{|\{t\in[\min(0,T),\max(0,T)]\mid \varphi_{-t}(A\cap S^*H)\cap A\cap S^*H\}|}{|T|}.
$$
\end{lemma}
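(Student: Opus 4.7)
The plan is to establish the inequality (second claim) first and then deduce $\mu|_{S^*_HM}(\mathcal{T}_+) = 0$ (first claim) from it by specialization.

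For the inequality, I would split $A = (A\cap S^*H)\sqcup (A\setminus S^*H)$ and handle the two pieces separately. On $S^*_HM\setminus S^*H$ we have $\xi_n\neq 0$, so $H_p x_n = 2\xi_n$ does not vanish and the geodesic flow is transverse to $S^*_HM$. The standard flow-box / disjoint-translates argument applies: for any compact piece $K\subset A\setminus S^*H$ with $|\xi_n|\geq c>0$, the translates $\phi_{t_i}(K)$ for $t_i = i\delta/N$, $i=0,\ldots,N-1$, are pairwise disjoint once $\delta$ is small enough (since $x_n(\phi_{t}(\rho))\approx 2\xi_n t$ separates them), so $\phi_t$-invariance of $\mu$ gives $N\mu(K)\leq 1$ for every $N$, forcing $\mu(K)=0$. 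Exhausting $A\setminus S^*H$ by such compacts, $\mu(A\setminus S^*H)=0$ and hence $\mu(A)=\mu(A\cap S^*H)$; in particular, if $A\cap S^*H$ is empty, both sides of the inequality vanish.

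The genuinely delicate case is the tangential one, $A\subset S^*H$, where the flow is tangent to $S^*H$ at first order and straightforward transversality fails. Here I would replace the literal flow-box argument with a disjoint-translates argument controlled by the return set $R := \{t\in\mathbb{R} : \phi_{-t}(A)\cap A \neq \emptyset\}$, which is symmetric about $0$. For any choice of times $0 = t_0 < t_1 < \cdots < t_{N-1}$ in $[0,T]$ with all pairwise differences $t_i - t_j \notin R$, the sets $\phi_{t_i}(A)$ are pairwise disjoint (because $\phi_{t_i}(A)\cap \phi_{t_j}(A)=\phi_{t_j}(\phi_{t_i-t_j}(A)\cap A)$), so again $N\mu(A)\leq 1$. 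A greedy selection in $[0,T]$, in which the set of forbidden next times after $k$ prior choices has Lebesgue measure at most $k\cdot|R\cap[-T,T]|$, lets me fit $N\gtrsim T/|R\cap[0,T]|$ such points, producing the bound $\mu(A)\lesssim\liminf_{T\to 0^+}|R\cap[0,T]|/T$.

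With the inequality in hand, $\mu|_{S^*_HM}(\mathcal{T}_+)=0$ follows from two applications. Applied to $A=S^*_HM\setminus S^*H$, the set $A\cap S^*H$ is empty and the RHS vanishes, giving $\mu(S^*_HM\setminus S^*H)=0$; this covers $\mathcal{T}_+\setminus S^*H$. Applied to $A_n := \{\rho\in S^*H : T_{S^*H}(\rho) > 1/n\}$, the definition of $T_{S^*H}$ guarantees $\phi_t(\rho)\notin S^*H$ for all $\rho\in A_n$ and $0<t\leq 1/n$, so $\phi_{-t}(A_n)\cap A_n = \emptyset$ for $t\in(0,1/n]$; the RHS is zero and $\mu(A_n)=0$. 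A countable union yields $\mu(\mathcal{T}_+\cap S^*H)=0$, completing the proof.

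The main obstacle is obtaining the inequality with the stated constant: my naive greedy packing produces $\mu(A)\leq C\cdot\liminf |R\cap[0,T]|/T$ with a constant $C$ arising from the symmetry of $R$, rather than the sharp value $C=1$. Recovering the exact constant will likely require a more careful averaging — for example a convolution/Cauchy--Schwarz manipulation of $\int \mu(A\cap\phi_{-t}A)\,\chi_T(t)\,dt$ against well-chosen nonnegative bumps $\chi_T$ — or else a direct flow-box argument exploiting the second-order contact of orbits with $S^*H$ to reduce to points where the second fundamental form vanishes.
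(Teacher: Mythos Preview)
Your overall plan is sound, and the deduction of $\mu(\mathcal{T}_+)=0$ from the inequality is exactly how the paper proceeds (first on $(S^*_HM)_\delta=\{T_{S^*_HM}>2\delta\}$ to get $\mu|_{S^*_HM}=\mu|_{S^*H}$, then on $(S^*H)_\delta=\{T_{S^*H}>\delta\}$). The place where you diverge from the paper is the proof of the inequality itself, and this is precisely where your difficulty with the constant arises.

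The paper does \emph{not} use a packing/disjoint-translates argument. Instead it uses a one-line averaging identity that delivers the sharp constant for free, and it does so uniformly for any Borel $A\subset S^*_HM$---no splitting into tangential and transverse pieces is needed. By $\phi_t$-invariance and Fubini,
\[
T\,\mu(A)\;=\;\int_0^T \mu(\varphi_{-t}A)\,dt\;=\;\int_{S^*M}\Big|\{t\in[0,T]:\varphi_t(\rho)\in A\}\Big|\,d\mu(\rho).
\]
For each $\rho$ the sojourn set $S_\rho:=\{t\in[0,T]:\varphi_t(\rho)\in A\}$ satisfies $S_\rho - t_0 \subset R:=\{r:\varphi_r(A)\cap A\neq\emptyset\}$ for any $t_0\in S_\rho$ (since $\varphi_{t-t_0}$ carries $\varphi_{t_0}(\rho)\in A$ to $\varphi_t(\rho)\in A$). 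Choosing $t_0$ arbitrarily close to $\inf S_\rho$ forces $S_\rho - t_0 \subset R\cap[0,T]$ up to a null set, hence $|S_\rho|\le |R\cap[0,T]|$. Since $\mu$ is a probability measure, integrating gives $\mu(A)\le |R\cap[0,T]|/T$, which is the stated bound with constant~$1$. The same computation run on $[T,0]$ for $T<0$ yields the two-sided $\liminf$.

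So the missing idea in your approach is to bound the \emph{sojourn time} of each orbit rather than to pack disjoint translates. Your greedy packing necessarily loses a factor coming from the symmetry of $R$, whereas translating $S_\rho$ by its own infimum uses only $R\cap[0,T]$. Once you replace the packing step by this averaging/sojourn argument, your proof goes through verbatim and you can drop the separate transversal flow-box analysis.
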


   \begin{proof}

  Let $A\subset S^*_HM$ Borel measurable. Then, for any $T>0 $
  \begin{equation}
  \label{e:squirrel}
  \begin{aligned}
  \mu|_{S^*_HM}(A)=\mu(A)&=\frac{1}{T}\int_0^{T}\mu(1_{\varphi_{-t}(A)})dt=\frac{1}{T}\mu\Big(\int_0^T 1_{\varphi_{-t}(A)}(\rho)dt\Big) d\mu(\rho)\\
  &\leq \frac{|\{t\in[0,T]\mid \varphi_t(A)\cap A\neq \emptyset\}|}{T}\\
  &\leq  \frac{|\{ t\in[0,T]\mid \varphi_t(A)\cap S^*_HM\neq \emptyset\}|}{|T|}
  \end{aligned}
  \end{equation}

Now, define
 $$(S_H^*M)_{\delta} = \{\zeta \in S^*_H M: T_{S^*_HM}(\zeta) > 2 \delta\},$$
where $$T_{S^*_HM}(\zeta):= \inf_{t > 0} \{\phi_t(\zeta) \in S^*_H M \}. $$
Then, since $T_{S^*_HM}$ is lower semincontinuous, $(S_H^M)_{\delta}$ is open and hence measurable. Therefore by~\eqref{e:squirrel}
$$
\mu|_{S^*_HM}( (S_H^*M)_{\delta})=0
$$
and hence 
$$
\mu|_{S^*_HM}( \{\zeta \mid T_{S^*_HM}(\zeta)>0\})=0.
$$

Note that $S^*_HM\setminus S^*H\subset  \{\zeta \mid T_{S^*_HM}(\zeta)>0\}$ and hence, 
$$
\mu|_{S^*_HM}=\mu|_{S^*H}.
$$

Therefore, arguing as in~\eqref{e:squirrel}, 
\begin{equation}
\label{e:squirrel2}
  \mu|_{S^*_HM}(A)\leq \liminf_{T\to 0}  \frac{|\{t\in[\min(0,T),\max(0,T)]\mid \varphi_{-t}(A\cap S^*H)\cap A\cap S^*H\}|}{|T|}
\end{equation}
Now, with $T_{S^*H}$ as above, define 
$$
(S^*H)_\delta:=\{\zeta\in S_H^*M\mid T_{S^*H}(\zeta)>\delta\}.
$$
Then~\eqref{e:squirrel2} implies $\mu|_{S_H^*M}( T_{S^*H}>0)=0$.

   \end{proof}

 \begin{corollary}
 \label{c:squirrel}
Define
$$
\mathcal{G}^k:=\{\rho\in S^*_HM\mid [H_p^kx_n](\rho)\neq 0,\, [H_p^jx_n](\rho)=0,j<k\}.
$$
Then, 
$$
\mu|_{S^*_HM}\Big(\bigcup_{k=0}^\infty\mathcal{G}^k\Big)=0.
$$
In particular, 
$$
\mu|_{S^*_HM}=\mu|_{S^*H}
$$
and
$$
\mu|_{S^*_HM}\big(\{ (0,x',\xi')\mid Q(0,x',\xi')\neq 0\}\big)=0.
$$
\end{corollary}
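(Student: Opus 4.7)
The plan is to reduce the corollary to Lemma~\ref{mushm=muh} by showing that each $\mathcal{G}^k$ with $k \geq 1$ is contained in the set $\mathcal{T}_+=\{T_{S^*H}>0\}$ of Lemma~\ref{mushm=muh}. Once that inclusion is established, the first assertion $\mu|_{S^*_HM}\bigl(\bigcup_{k\geq 0}\mathcal{G}^k\bigr)=0$ follows immediately, since $\mathcal{G}^0\cap S^*_HM=\emptyset$ (because $x_n\equiv 0$ on $S^*_HM$). The second and third assertions will then fall out by identifying which $\mathcal{G}^k$ specific subsets belong to.

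For the key inclusion, fix $\rho\in\mathcal{G}^k$ with $k\geq 1$ and set $f(t):=x_n(\phi_t(\rho))$. Then $f$ is smooth on a neighbourhood of $0$ with $f^{(j)}(0)=[H_p^j x_n](\rho)=0$ for $j<k$ and $f^{(k)}(0)\neq 0$. Taylor's theorem with remainder gives $f(t)=\frac{t^k}{k!}f^{(k)}(0)+O(t^{k+1})$, so there exists $\delta>0$ with $f(t)\neq 0$ for $0<|t|<\delta$. Since membership in $S^*H$ forces $x_n=0$, we conclude $\phi_t(\rho)\notin S^*H$ for $0<t<\delta$, hence $T_{S^*H}(\rho)\geq\delta>0$ and $\rho\in\mathcal{T}_+$. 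Applying Lemma~\ref{mushm=muh} gives $\mu|_{S^*_HM}(\mathcal{G}^k)=0$ for every $k\geq 1$, and countable additivity finishes the first claim.

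For the remaining two identifications I would use the Fermi form of the principal symbol recorded in~\eqref{SYMBOL}--\eqref{R2}. Since $p=\xi_n^2+R_2(x',x_n,\xi')$ with $R_2$ independent of $\xi_n$, we have $H_p x_n=\partial_{\xi_n}p=2\xi_n$. Hence $\mathcal{G}^1=\{\xi_n\neq 0\}\cap S^*_HM=S^*_HM\setminus S^*H$, and the vanishing $\mu|_{S^*_HM}(\mathcal{G}^1)=0$ is exactly $\mu|_{S^*_HM}=\mu|_{S^*H}$. For the third point, compute on $S^*H$ (where $\xi_n=0$):
\[
H_p^2 x_n=H_p(2\xi_n)=-2\partial_{x_n}p\big|_{x_n=0}=-2\partial_{x_n}R_2(x',x_n,\xi')\big|_{x_n=0}=-4Q(0,x',\xi'),
\]
using~\eqref{R2}. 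Therefore $\{(0,x',\xi')\in S^*H : Q(0,x',\xi')\neq 0\}\subset\mathcal{G}^2$, and the conclusion follows from the first part.

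No serious obstacle is anticipated; the only subtlety is verifying that the Taylor-expansion argument gives $T_{S^*H}(\rho)>0$ rather than merely $T_{S^*_HM}(\rho)>0$, but the equivalence is automatic because $S^*H\subset S^*_HM$ and $x_n$ vanishes on all of $S^*_HM$, so the obstruction to returning to $S^*H$ is the same as the obstruction to returning to $\{x_n=0\}$. The computation $H_p^2 x_n|_{S^*H}=-4Q$ is the only place where the specific Fermi structure enters, and it is a one-line symbol calculation.
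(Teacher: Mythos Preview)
Your proposal is correct and follows essentially the same approach as the paper's proof: both argue that for $\rho\in\mathcal{G}^k$ the Taylor expansion $x_n(\phi_t(\rho))=\tfrac{t^k}{k!}[H_p^kx_n](\rho)+O(t^{k+1})$ forces $T_{S^*H}(\rho)>0$, whence $\mathcal{G}^k\subset\mathcal{T}_+$ and Lemma~\ref{mushm=muh} applies. Your write-up simply spells out more explicitly the identifications $\mathcal{G}^1=S^*_HM\setminus S^*H$ and $\{Q\neq 0\}\cap S^*H\subset\mathcal{G}^2$ that the paper leaves implicit.
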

\begin{proof}
Observe that if $\rho\in \mathcal{G}^k$, then, 
$$
|x_n(\phi_t(\rho))|\geq ct^k+O(t^{k+1})
$$
and in particular, $T_{S^*H}(\rho)>0$.  Therefore, $\mathcal{G}^k\subset \mathcal{T}_+$ and the claim follows.
\end{proof}

 This concludes the proof of Theorem~\ref{t:invMeas}.
 
\end{proof}

\subsection{A conjecture} Theorem \ref{t:invMeas} and accompanying Lemmas leave open some purely dynamical
questions concerning the the restriction  $\mu |_{S^*_H M } = \mu |_{S^*H}$ of an invariant measure. We state a conjecture which
we hope to explore  in the future.

We denote by $\gamma$ a geodesic of $(M,g)$ and also (by abuse of notation)  the corresponding orbit of the geodesic flow in $S^*M$.
When (and only when) $\gamma$ is a periodic geodesic, we  denote by $\delta_{\gamma}$ the normalized periodic orbit measure $\delta_{\gamma}(f) = \frac{1}{L_{\gamma}} \int_{\gamma} f ds$ where $L_{\gamma}$ is the length of $\gamma$. 

\begin{conjecture} Suppose that $\mu$ is an invariant probability measure for the geodesic flow of a compact Riemannian manifold. Suppose
that $H \subset M$ is a smooth hypersurface and that $\mu (S^*H) > 0$. Then $\mu|_{S^*H}$ is supported on a union of periodic geodesics $\gamma$ such
that $\gamma \cap S^*H$ has positive arc-length measure and $\mu |_{S^* H \cap \gamma} \ll \delta_{\gamma}$ (i.e. is absolutely continuous).
\end{conjecture}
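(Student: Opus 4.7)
The plan is to reduce to ergodic $\mu$ via the ergodic decomposition, use Birkhoff's theorem together with the Lebesgue density theorem and Corollary~\ref{c:squirrel} to show that a typical orbit has positive-length intervals inside $S^*H$, and then argue that such an orbit must be periodic; absolute continuity will then be automatic. For the reduction, write $\mu = \int \mu_e\, d\eta(e)$ with each $\mu_e$ ergodic; since $\mu(S^*H) = \int \mu_e(S^*H)\, d\eta(e)$, the hypothesis forces $\mu_e(S^*H) > 0$ on a set of positive $\eta$-measure, and it suffices to treat each such ergodic component separately and sum.

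Fix $\mu$ ergodic with $\mu(S^*H) > 0$. Birkhoff's theorem gives, for $\mu$-a.e. $\rho$,
$$\lim_{T\to\infty} \frac{1}{T}\bigl|\{t \in [0,T] : \phi_t(\rho) \in S^*H\}\bigr| = \mu(S^*H) > 0.$$
Set $T_\rho := \{t \in \R : \phi_t(\rho) \in S^*H\}$. A transverse crossing of $S^*H$ contributes an isolated time, hence Lebesgue density $0$ in $T_\rho$, and a tangency of \emph{finite} order still contributes only isolated times. By Corollary~\ref{c:squirrel}, the orbit is tangent to $S^*H$ to infinite order at $\mu$-a.e. point of $\supp \mu|_{S^*H}$; applying the Lebesgue density theorem to $T_\rho$ then forces $T_\rho$ to contain maximal closed intervals of positive length, each corresponding to a segment of the geodesic $\pi\phi_t(\rho)$ that lies entirely in $H$.

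The main obstacle is the subsequent step: to conclude that the orbit through $\rho$ is in fact periodic. When $\dim M = 2$ this reduces to a dimension count, since $S^*H$ is $1$-dimensional inside the $3$-dimensional $S^*M$ and the only $1$-dimensional closed invariant sets are unions of periodic orbits, forcing the ergodic $\mu$ to be a normalized periodic orbit measure. In higher dimensions the statement is delicate---for example, a totally geodesic hypersurface $H \simeq T^{n-1}$ in a flat torus supports non-periodic ergodic invariant probability measures on $S^*H$---so one likely needs extra hypotheses such as negative sectional curvature, which forbids totally geodesic hypersurfaces and makes $\phi_t$ Anosov. Under such a hypothesis one would apply the Anosov closing lemma to the recurrent family of $H$-geodesic segments produced above to find a periodic orbit in $\supp \mu$, and then use ergodicity together with rigidity of the supporting invariant set to identify $\mu$ with the periodic orbit measure. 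Absolute continuity is then immediate: the only invariant probability measure on a periodic orbit $\gamma$ is $\delta_\gamma$, and its restriction to the positive-length set $S^*H \cap \gamma$ is trivially absolutely continuous with respect to $\delta_\gamma$.
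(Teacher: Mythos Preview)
The statement you are trying to prove is presented in the paper as a \emph{Conjecture}, not as a theorem; the authors explicitly say they ``hope to explore [it] in the future'' and give no proof. So there is nothing in the paper to compare your argument against, and the relevant question is simply whether your outline actually closes.

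It does not, and the first real gap is earlier than the one you flag. The step ``applying the Lebesgue density theorem to $T_\rho$ then forces $T_\rho$ to contain maximal closed intervals of positive length'' is false as stated. The Lebesgue density theorem only says that almost every point of $T_\rho$ is a density point; it does \emph{not} imply that $T_\rho$ contains any interval. A fat Cantor set has positive measure and every point a density point, yet is nowhere dense. Moreover $T_\rho=\{t:x_n(\phi_t(\rho))=0\}$ is the zero set of a smooth function, and smooth functions can vanish to infinite order on a fat Cantor set while being nonzero on its complement. Thus Corollary~\ref{c:squirrel} (infinite-order tangency on $\supp\mu|_{S^*H}$) combined with Lebesgue density is not enough to produce a single geodesic arc lying in $H$. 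This is precisely the heart of the conjecture and cannot be finessed by soft measure theory.

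Your acknowledged obstacle---deducing periodicity---is also not resolved even in the surface case. The sentence ``the only $1$-dimensional closed invariant sets are unions of periodic orbits'' is not a theorem, and in any event you have not shown that $\supp\mu$ is $1$-dimensional: ergodicity plus $\mu(S^*H)>0$ does not force $\supp\mu\subset S^*H$, since the orbit can enter and leave $S^*H$. Your own flat-torus example shows the general statement fails without curvature hypotheses, and the Anosov closing lemma sketch you give produces \emph{some} periodic orbit near a recurrent segment, not the identification $\mu=\delta_\gamma$. In short, the proposal is a reasonable heuristic for why the conjecture is plausible, but each of the two main steps (arcs in $H$; periodicity) remains open.
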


The conjecture is simplest in dimension two, when $\dim H = \dim \gamma$. In that case one consequence of the conjecture is that if $\mu(S^*H)>0$, then $H$ has positive measure intersection with a periodic geodesic. In the case where $(M,g)$ is of negative curvature, each invariant measure
is an orbital averaging measure over the orbit through a quasi-regular point. This orbit may touch $S^*H$ repeatedly,  in a quasi-periodic fashion, or it may spiral
in to $S^*H$ over a part of the orbit. If the orbital average charges $S^*H$, we conjecture that it must contain a periodic orbit measure as an ergodic component.

   \section{\label{FINALSECT} Rellich proof of Theorem \ref{thm1} }
 
 We can now state the main ingredient in the proof of Theorem \ref{t:RCD}. Once that theorem is proved, we will finish the section by proving Theorem~\ref{thm1}.
\begin{proposition} \label{I0PROP} Let $H \subset M$ be a hypersurface  in a Riemannian manifold and $I_0(a, \delta, \mu)$ 
and $II_0(a, \delta, \mu)$ be as  in Proposition \ref{MDMPROP}. Then
$$\left\{ \begin{aligned} (i) & 
\liminf_{\delta \downarrow 0} I_0(a, \delta, \mu) =   \int_{S^*_H M} a(\zeta)  |\xi_n(\zeta)| 
 d \nu^{\perp}(\zeta) ;\\ (ii) &
\lim_{\delta \downarrow 0} II_0(a, \delta, \mu) = 0
.  \end{aligned} \right.$$
where $\nu^{\perp}$ is defined in \eqref{e:disintegrate}.  



\end{proposition}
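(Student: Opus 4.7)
The plan is to reduce everything to the decomposition $\mu = \mu|_{S^*_HM} + \mu^\perp$ from \eqref{DECOMP}, feeding the support information on $\mu|_{S^*_HM}$ provided by Corollary~\ref{c:squirrel} into part (ii), and then using the geodesic-flow disintegration of Lemma~\ref{l:inv} to compute the $\mu^\perp$-contribution in part (i).

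For (ii) I would invoke dominated convergence: since $0 \leq \chi(x_n/\epsilon)^2 \leq 1$ and converges pointwise to $\mathbf{1}_{\{x_n=0\}}$ as $\epsilon \downarrow 0$, while $P_2^2$ is continuous on the compact set $S^*M$,
\begin{equation*}
\int \chi(x_n/\epsilon)^2 P_2^2\, d\mu \;\longrightarrow\; \int_{S^*_HM} P_2^2\, d\mu \;=\; \int_{S^*H} P_2^2 \, d(\mu|_{S^*H}),
\end{equation*}
the last equality being $\mu|_{S^*_HM} = \mu|_{S^*H}$ from Corollary~\ref{c:squirrel}. On $S^*H$ one has $\xi_n = 0$, so formulas \eqref{P2DEF}--\eqref{PH} reduce $P_2$ to $2 Q(0,x',\xi')\, a(x',\xi')$; Corollary~\ref{c:squirrel} also gives $Q(0,\cdot,\cdot) = 0$ on $\supp(\mu|_{S^*H})$, so the integral vanishes and (ii) follows on taking the square root.

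For (i), the $\mu|_{S^*_HM}$-piece contributes nothing because its support lies in $S^*H \subset \{\xi_n = 0\}$, killing the factor $\xi_n^2$. For the $\mu^\perp$-piece I would exhaust $S^*_HM \setminus S^*H$ by compact sets $K$ on which $|\xi_n|$ is bounded below. For each $K$, since $H_p x_n = 2\xi_n \neq 0$ there, one can choose $\delta_K > 0$ so that the flow-out $\bigcup_{|t| \leq \delta_K} \phi_t(K)$ injects into $S^*M$ and the hypotheses of Lemma~\ref{l:inv} hold; in particular $d\mu = dt \otimes d\nu^\perp(\rho)$ on this set. Using the Taylor expansion $x_n(\phi_t(\rho)) = 2\xi_n(\rho)\, t + O(t^2)$, I would substitute $u = x_n(\phi_t(\rho))/\epsilon$, yielding $dt = \tfrac{\epsilon}{2\xi_n(\rho)}\, du$ modulo lower-order terms; the inner $t$-integral then reduces, after orienting $u$ to match $t$, to
\begin{equation*}
-|\xi_n(\rho)|\, a(\rho) \int_{\R} \tchi'(u)\, du \;=\; |\xi_n(\rho)|\, a(\rho),
\end{equation*}
where $\tchi(-\infty) = 1$ and $\tchi(+\infty) = 0$ by \eqref{d:tchi}. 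Integrating against $d\nu^\perp$ over $K$ and letting $K$ exhaust $S^*_HM \setminus S^*H$ gives the claimed formula; extension to all of $S^*_HM$ is harmless since $\xi_n \equiv 0$ on $S^*H$.

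The hard part is the lack of uniformity near the tangential set $S^*H$: as $|\xi_n(\rho)| \to 0$ the escape time $\delta_K$ shrinks and the $O(t^2/\epsilon)$ remainder in the substitution degenerates, so one cannot take $K = S^*_HM \setminus S^*H$ directly. This is precisely why the statement requires only a $\liminf$ bound --- any mass escaping toward $\{\xi_n = 0\}$ contributes non-negatively in the $\epsilon \to 0$ limit, so Fatou applied to the exhaustion suffices to yield the stated inequality.
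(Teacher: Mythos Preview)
Your treatment of part (ii) is correct and essentially identical to the paper's (Lemma~\ref{II0PROP}): dominated convergence collapses the integral onto $S^*_HM$, Corollary~\ref{c:squirrel} reduces it to $S^*H$, and there $P_2 = 2Q(0,x',\xi')a$ vanishes $\mu|_{S^*H}$-a.e.

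For part (i), your core mechanism---pass to the flow disintegration $d\mu = dt\otimes d\nu^\perp$ from Lemma~\ref{l:inv} and substitute $w = x_n(\phi_t(\rho))/\epsilon$---is exactly what the paper does in Lemma~\ref{l:calcII}. The gap is in the glancing region. You have read the conclusion as a one-sided bound, but it is stated as an equality, and the paper in fact proves the full limit (which is what the proof of Theorem~\ref{t:RCD} actually uses). Your exhaustion-plus-Fatou step yields only $\liminf \geq$, and even that only when $a \geq 0$, so that the uncontrolled piece on the complement of $FL_{\delta_K}(K)$ has a sign. No upper bound follows: your compact sets $K$ are fixed while $\epsilon \to 0$, and you never show that the contribution from near $\{\xi_n = 0\}$ actually vanishes rather than merely being nonnegative.

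The paper's device is to let the tangential cutoff move with $\epsilon$. Fix $\chi_1 \in C_c^\infty(-2,2)$ with $\chi_1 \equiv 1$ on $[-1,1]$ and a small $\delta>0$, and split the integrand by $\chi_1(\delta\epsilon^{-1/2}\xi_n)$. On $\supp \chi_1(\delta\epsilon^{-1/2}\xi_n)$ one has $\xi_n^2 \leq 4\delta^{-2}\epsilon$, so $\epsilon^{-1}\tchi'(x_n/\epsilon)\xi_n^2 a$ is bounded by $C\delta^{-2}$ uniformly in $\epsilon$ and tends to $0$ pointwise; dominated convergence sends this piece to exactly $0$. On $\supp(1-\chi_1(\delta\epsilon^{-1/2}\xi_n))$ one has $|\xi_n| \geq \delta^{-1}\epsilon^{1/2}$, hence on the support of $\tchi'(x_n/\epsilon)$ one gets $|x_n| \leq \epsilon \leq \delta^2|\xi_n|^2$. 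For $\delta$ small this lands inside the region $\{|x_n| \leq (3C_1)^{-1}|\xi_n|^2\}$ on which the paper checks directly that $(t,\zeta)\mapsto \phi_t(\zeta)$ is a global coordinate system, and then your substitution goes through cleanly. This yields the two-sided limit for arbitrary $a$, not just the lower bound for nonnegative $a$.
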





\subsection{Proof of Proposition \ref{I0PROP}(ii)}

By \eqref{I-IIDEF}, Proposition  \ref{I0PROP}(ii) asserts the following:

\begin{lemma} \label{II0PROP}  Let $H \subset M$ be a hypersurface. For any fixed $\epsilon > 0$,
 $$\begin{array}{l}  \lim_{\delta \to 0} 
 
	 \int_{S^*M} \chi^2 (\frac{x_n}{\delta})  P^2_2 (x', \xi', x_n) d\mu
	 = 0.\end{array}$$
 \end{lemma}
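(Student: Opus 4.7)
The plan is to combine a dominated-convergence argument with the structural information about $\mu|_{S^*_HM}$ established in Theorem~\ref{t:invMeas} and Corollary~\ref{c:squirrel}. The cutoff $\chi(x_n/\delta)$ is uniformly bounded by $\|\chi\|_\infty$ and supported in $\{|x_n|\leq \delta\}$, while $P_2(x',x_n,\xi',\xi_n)$ is a fixed continuous function on the compact set $S^*M\cap \{|x_n|\leq 1\}$, so the integrand is dominated by $\|\chi\|_\infty^{2}\sup|P_2|^{2}$, which is $\mu$-integrable. Since $\chi(0)=1$ and $\chi(x_n/\delta)\to 0$ pointwise for every $x_n\neq 0$ as $\delta\downarrow 0$, the integrand converges pointwise to $\mathbf{1}_{\{x_n=0\}}\,P_2^2$. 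Dominated convergence then gives
\begin{equation*}
\lim_{\delta\to 0}\int_{S^*M}\chi^2\!\left(\tfrac{x_n}{\delta}\right)P_2^2\,d\mu \;=\; \int_{S^*_HM} P_2^2\big|_{x_n=0}\,d\mu.
\end{equation*}

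It remains to show that $P_2|_{x_n=0}$ vanishes on the support of $\mu|_{S^*_HM}$. By Corollary~\ref{c:squirrel}, $\mu|_{S^*_HM}=\mu|_{S^*H}$, so in particular the support lies in $\{\xi_n=0\}$; the same corollary shows that $Q(0,x',\xi')=0$ on this support. Recalling the formula \eqref{P2DEF},
\begin{equation*}
P_2(x',x_n,\xi',\xi_n) \;=\; \partial_{x_n}R_2(x',x_n,\xi')\,a(x',\xi') \;-\; \xi_n\{R_2(x',x_n,\xi'),a(x',\xi')\},
\end{equation*}
and the expansion \eqref{R2}, the restriction $\partial_{x_n}R_2|_{x_n=0}=2Q(0,x',\xi')$ vanishes $\mu|_{S^*H}$-a.e., and the second term vanishes identically on $\{\xi_n=0\}$. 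Thus $P_2|_{x_n=0}=0$ on $\mathrm{supp}(\mu|_{S^*_HM})$, and the limiting integral is zero.

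I expect no serious obstacle here: the only delicate point is verifying that $\mu|_{S^*_HM}$ is concentrated where both factors defining $P_2|_{x_n=0}$ vanish, but this is exactly the content of the previously-established invariant measure decomposition. The dominated convergence step is routine once one notes that $\chi$ is compactly supported and $P_2$ is continuous with at-most-polynomial growth in $\xi$ on the compact set $S^*M$. Hence the only real input beyond elementary measure theory is Theorem~\ref{t:invMeas}/Corollary~\ref{c:squirrel}.
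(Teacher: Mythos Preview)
Your proof is correct and follows essentially the same approach as the paper: dominated convergence reduces the limit to $\int_{S^*_HM} P_2^2|_{x_n=0}\,d\mu$, and then Corollary~\ref{c:squirrel} (giving $\mu|_{S^*_HM}=\mu|_{S^*H}$ and $Q(0,x',\xi')=0$ on the support) forces this integral to vanish. Your write-up is in fact slightly more explicit than the paper's, which simply records the limit as $\int Q^2(0,x',\xi')a^2(x',\xi')\,d\mu|_{S^*H}$ and invokes Corollary~\ref{c:squirrel}.
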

 
 \begin{proof}

Note that by the dominated convergence theorem,
 $$
  \lim_{\delta \to 0}  
 \int_{S^*H} \chi^2 (\frac{x_n}{\delta})  P^2_2 (x', \xi', x_n) d\mu    =\int  Q^2(0, x', \xi') a^2(x', \xi') d\mu |_{S^*H} =0 
 $$
 where the last equality follows from Corollary~\ref{c:squirrel}.
  \end{proof}

 The following lemma completes the proof of Proposition \ref{I0PROP}.

\begin{lemma}
\label{l:calcII}
We have 
$$
\lim_{\e\to 0}I_0(a,\e,\mu)=\int_{S^*_HM\setminus S^*H}|\xi_n|a(\pi(q))d\nu^{\perp}(q).
$$
where $\nu^{\perp}$ is defined in \eqref{e:disintegrate}. 
\end{lemma}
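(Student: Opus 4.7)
The strategy is to combine the decomposition $\mu = \mu|_{S^*_HM} + \mu^\perp$ with the geodesic disintegration from Lemma~\ref{l:inv} over compact subsets of $S^*_HM \setminus S^*H$, handling a small neighborhood of $S^*H$ separately. The initial observation is that by Corollary~\ref{c:squirrel}, $\mu|_{S^*_HM} = \mu|_{S^*H}$ is supported in $\{x_n = 0\}$, while $\tilde\chi'(x_n/\epsilon)/\epsilon$ vanishes identically there, since $\tilde\chi \equiv 1$ near $0$. Hence $\mu|_{S^*_HM}$ contributes nothing to $I_0(a,\epsilon,\mu)$, and I need only analyze the transverse piece $\mu^\perp$.

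For each $\eta > 0$, introduce the compact set $K_\eta := \{\rho \in S^*_HM : |\xi_n(\rho)| \geq \eta\}$, a subset of $S^*_HM \setminus S^*H$. Since $H_p x_n = 2\xi_n$ is bounded away from zero on $K_\eta$, compactness produces $\delta = \delta(\eta) > 0$ such that $(t,\rho) \mapsto \phi_t(\rho)$ is a diffeomorphism from $(-\delta, \delta) \times K_\eta$ onto a flowbox $FL_\delta(K_\eta)$, with $\phi_t(\rho) \notin S^*_HM$ whenever $0 < |t| \leq \delta$. By Lemma~\ref{l:inv}, $\mu = dt\,d\nu^\perp(\rho)$ on this flowbox, so
\begin{equation*}
-\frac{2}{\epsilon}\int_{FL_\delta(K_\eta)}\tilde\chi'(x_n/\epsilon)\xi_n^2 a\,d\mu = \int_{K_\eta}\left(\int_{-\delta}^\delta \frac{-2\tilde\chi'(x_n(\phi_t(\rho))/\epsilon)}{\epsilon}\xi_n^2(\phi_t(\rho))a(\phi_t(\rho))\,dt\right)d\nu^\perp(\rho).
\end{equation*}
In the inner integral, the change of variables $s = x_n(\phi_t(\rho))/\epsilon$ is a diffeomorphism for small $\epsilon$, with $dt = \epsilon\,ds/(2\xi_n(\phi_{t(s)}(\rho)))$; one factor of $\epsilon$ and one of $\xi_n$ cancel. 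Since $t(s) \to 0$ uniformly on $\supp \tilde\chi' \subset [1/2,1]$ as $\epsilon \to 0$, the inner integral converges to $-\xi_n(\rho) a(\pi(\rho)) \int_{1/2}^1 \tilde\chi'(s)\,ds = |\xi_n(\rho)| a(\pi(\rho))$ when $\xi_n(\rho) > 0$; the case $\xi_n(\rho) < 0$ proceeds analogously with opposite orientation of the $s$--integration, yielding the same value. Dominated convergence over $K_\eta$ then gives the main contribution $\int_{K_\eta} |\xi_n(\rho)|a(\pi(\rho))\,d\nu^\perp(\rho)$.

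The main obstacle is bounding the complementary contribution from $\{x_n \in [\epsilon/2, \epsilon]\} \setminus FL_\delta(K_\eta)$ uniformly in small $\epsilon$. The key geometric observation is that for $\epsilon \ll \eta\delta$, any point $\rho_0$ in this complement, traced back by $\phi_{-t}$ for some $t = O(\epsilon/\eta) < \delta$, reaches $S^*_HM \setminus K_\eta = \{|\xi_n| < \eta\}$; continuity of $\xi_n$ along the flow then forces $|\xi_n(\rho_0)| \leq 2\eta$, so $\xi_n^2 \leq 4\eta^2$ throughout the complement. The remainder is thus controlled by $C\eta^2 \int |\tilde\chi'(x_n/\epsilon)|/\epsilon\, d\mu$. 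To bound this last quantity uniformly in $\epsilon$, I would either perform a dyadic decomposition on $|\xi_n|$ using flowboxes over shells $\{|\xi_n| \in [2^{-k-1}, 2^{-k}]\}$ and sum the resulting geometric series, or use the identity $\xi_n^2 \tilde\chi'(x_n/\epsilon)/\epsilon = \tfrac{1}{2}\xi_n H_p(\tilde\chi(x_n/\epsilon))$ together with $\phi_t$--invariance of $\mu$ to rewrite $I_0 = \int \tilde\chi(x_n/\epsilon) H_p(\xi_n a)\,d\mu$, whose integrand is manifestly uniformly bounded in $\epsilon$; the latter route yields, via $a = 1$, finiteness of $\int_{S^*_HM \setminus S^*H}|\xi_n|\,d\nu^\perp$ and hence forces the error to be $O(\eta^2)$.

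Finally, letting $\epsilon \to 0$ yields the flowbox contribution $\int_{K_\eta} |\xi_n|a(\pi)\,d\nu^\perp$; sending $\eta \to 0$, monotone convergence (using $|\xi_n| \leq 1$ together with the uniform bound just obtained) produces $\int_{S^*_HM \setminus S^*H} |\xi_n|a(\pi)\,d\nu^\perp$, while the remainder is $O(\eta^2) \to 0$. Combining these proves the lemma.
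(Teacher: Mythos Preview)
Your overall strategy coincides with the paper's: disintegrate $\mu$ via the geodesic flowbox over the transversal part of $S^*_HM$, change variables from $t$ to $x_n/\epsilon$ in the inner integral, and treat a neighborhood of $S^*H$ separately. The flowbox computation over $K_\eta$ is correct and matches the paper's.

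The gap is in the remainder step. Your bound
\[
|\text{remainder}|\le C\eta^2\int \frac{|\tilde\chi'(x_n/\epsilon)|}{\epsilon}\,d\mu
\]
is useless unless the last integral is uniformly bounded in $\epsilon$, and for a general invariant measure it need not be (think of $\mu$ concentrating like $\epsilon^{1/2}$ in $\{x_n\in[\epsilon/2,\epsilon]\}$; already for Liouville measure the dyadic sum $\sum_k 2^{k}\nu^\perp(A_k)\sim\int|\xi_n|^{-1}d\nu^\perp$ diverges, so it is not a geometric series). Your second fix---the rewriting $I_0=\int\tilde\chi(x_n/\epsilon)H_p(\xi_n a)\,d\mu$---does show that $I_0$ is uniformly bounded and even that $\int|\xi_n|\,d\nu^\perp<\infty$, but this does \emph{not} force the remainder to be $O(\eta^2)$: you have only an \emph{upper} bound on the remainder involving the unbounded quantity $\int|\tilde\chi'|/\epsilon\,d\mu$, and the finiteness of $\int|\xi_n|\,d\nu^\perp$ gives no control on that.

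The paper resolves this by letting the $\xi_n$--cutoff depend on $\epsilon$: one inserts $\chi_1(\delta\,\epsilon^{-1/2}\xi_n)$ with $\delta>0$ fixed. On its support $|\xi_n|\le 2\epsilon^{1/2}/\delta$, so the full integrand $\tfrac{1}{\epsilon}\tilde\chi'(x_n/\epsilon)\xi_n^2 a$ is bounded by $C/\delta^2$ on $S^*M$; dominated convergence then kills this piece as $\epsilon\to 0$. On the complementary region $\{|\xi_n|\gtrsim \epsilon^{1/2}\}$ one checks (via $\dot x_n=2\xi_n$ and $|\dot\xi_n|\le C_1$) that the flowbox map $\Psi(t,\zeta)=\phi_t(\zeta)$ covers all of $\{|x_n|\le \tfrac{1}{3C_1}|\xi_n|^2\}$, which contains the entire support of the remaining integrand for $\delta$ small; your change of variables $w=x_n/\epsilon$ then applies on the whole of that support at once, and dominated convergence in $(w,\zeta)$ finishes. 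In short: replace your fixed threshold $\eta$ by the moving threshold $\eta\sim\epsilon^{1/2}$, which makes $\xi_n^2/\epsilon$ bounded on the ``bad'' set and removes the need to control $\int|\tilde\chi'|/\epsilon\,d\mu$.
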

\begin{proof}
Let $\chi_1\in C_c^\infty(-2,2)$ with $\chi_1\equiv 1$ on $[-1,1]$. Then, observe that by the dominated convergence theorem, for any $\delta >0$, 
$$
\lim_{\e \to 0}\int_{S^*M}\frac{1}{\e}\tilde{\chi}'\Big(\frac{x_n}{\e}\Big)\xi_n^2\chi_1(\delta \e^{-\frac{1}{2}}\xi_n)a(x',\xi')d\mu=0.
$$
since the integrand is bounded by $\delta^{-2}$ on $S^*M$. {Next, observe that  for $(x(t),\xi(t))=\exp(tH_p)(x'_0,0,\xi'_0,\xi_n)$, 
\begin{equation}
\begin{aligned}
\dot{x}_n(t)=2\xi_n(t),\qquad |\xi_n(t)-\xi_n(0)|\leq C_1|t|,\qquad C_1:=\sup_{S^*M}|H_p\xi_n|.
\end{aligned}
\end{equation}
Therefore, the map
$$
\Psi(t,\zeta)\in \{(t,\zeta)\in \mathbb{R}\times S^*_HM\mid |t|<C_1^{-1}|\xi_n(\zeta)|\}\mapsto \exp(tH_p)(\zeta)\in S^*M
$$ 
is one to one. Suppose that $\zeta_0\in S^*M$ with $0<|x_n(\zeta_0)|\leq \frac{1}{3C_1^{-1}}|\xi_n(\zeta_0)|^2$. We will show that $\zeta_0$ lies in the image of $\Psi$. Since the arguments in other cases are the same, we assume $x_n(\zeta_0),\,\xi_n(\zeta_0)>0$. Then with $(x(t),\xi(t))=\exp(tH_p(\zeta_0))$,
$$
x_n(t)=x_n(\zeta_0)+2\int_0^t \xi_n(s)ds,\qquad |\xi_n(s)-\xi_n(\zeta_0)|\leq C_1|s|.
$$ 
Therefore, for $-\frac{1}{2C_1}|\xi_n(\zeta_0)|\leq t\leq 0$,  
$$
x_n(t)\leq x_n(\zeta_0)+\xi_n(\zeta_0)t
$$
In particular, 
$$
x_n(-\frac {1}{3C_1}|\xi_n(\zeta_0)||)\leq x_n(\zeta_0)-\frac{1}{3C_1}|\xi_n(\zeta_0)|^2\leq 0,
$$
and there is $t\in [-\frac {1}{3C_1}|\xi_n(\zeta_0)|,0]$ such that $x_n(t)=0$ and $\xi_n(t)\geq \xi_n(\zeta_0)/2$. Therefore, $\zeta_0$ lies in the image of $\Psi$. 
In particular, $(t,\zeta)\mapsto \Psi(t,\zeta)$ can be used as coordinates on
$$
\{|x_n|\leq \frac{1}{3C_1}|\xi_n|^2\}.
$$ }

Choosing $\delta>0$ small enough these coordinates are valid on 
$$
\supp \frac{1}{\e}\tilde{\chi}'\Big(\frac{x_n}{\e}\Big)\xi_n^2(1-\chi_1(\delta \e^{-\frac{1}{2}}\xi_n)a(x',\xi').
$$
Next, recall that by Lemma~\ref{l:inv} in these coordinates $\mu=dt d\nu^\perp(\zeta)$.
\begin{align*}
&\int_{S^*M}\frac{1}{\e}\tilde{\chi}'\Big(\frac{x_n}{\e}\Big)\xi_n^2[1-\chi_1(\delta \e^{-\frac{1}{2}}\xi_n)]a(x',\xi')d\mu\\
&=\int_{S^*_HM}\int_{\mathbb{R}}\frac{1}{\e}\tilde{\chi}'\Big(\frac{x_n(t,\zeta)}{\e}\Big)[\xi_n(t,\zeta)]^2[1-\chi_1(\delta \e^{-\frac{1}{2}}\xi_n(t,\zeta)]a((x',\xi')(t,\zeta))dtd\nu^{\perp}(\zeta)
\end{align*}
Now, since on the support of the integrant $c|\xi_n(0)|\leq |\xi_n(t)|\leq C|\xi_n(0)|$, and $\dot{x_n}(t)=2\xi_n(t)$, we can change variables $w=\e^{-1}x_n(t,q)$ to obtain
\begin{align*}
&\int_{S^*M}\tilde{\chi}'\Big(\frac{x_n}{\e}\Big)\xi_n^2[1-\chi_1(\delta \e^{-\frac{1}{2}}\xi_n)]a(x',\xi')d\mu\\
&=\frac{1}{2}\int_{S^*_HM}\int_{\mathbb{R}}\tilde{\chi}'(w)|\xi_n(\e w,\zeta)|[1-\chi_1(\delta \e^{-\frac{1}{2}}\xi_n(\e w,\zeta)]a((x',\xi')(\e w,\zeta))dwd\nu^{\perp}(\zeta).
\end{align*}
Then, sending $\e\to 0$ and applying the dominated convergence theorem, we obtain
\begin{align*}
\lim_{\e\to 0^+}&\int_{S^*M}\tilde{\chi}'\Big(\frac{x_n}{\e}\Big)\xi_n^2[1-\chi_1(\delta \e^{-\frac{1}{2}}\xi_n)]a(x',\xi')d\mu\\
&=\frac{1}{2}\int_{S^*_HM}\int_{\mathbb{R}}\tilde{\chi}'(w)|\xi_n(\zeta)|1_{|\xi_n|>0}a(\pi(\zeta))dwd\nu^{\perp}(\zeta)\\
&=-\frac{1}{2} \int_{S^*_HM}\int_{\mathbb{R}}|\xi_n(\zeta)|1_{|\xi_n|>0}a(\pi(\zeta))d\nu^{\perp}(\zeta)
\end{align*}
where $\pi:S^*_HM\to B^*H$ denotes the orthogonal projection map.
\end{proof}

\begin{proof}[Completion of the proof of Theorems~\ref{t:RCD}]
Observe that by Proposition~\ref{MDMPROP}, 
$$
\begin{aligned} \left|\lim_{ h  \to 0} \left(\begin{aligned}&\lll Op_h(a)  h D_\nu \phi_h |_{H} , h D_\nu \phi_h |_H \rrr_{L^2(H)} \\&\qquad+ \lll Op_h(a)  (1
+ h^2 \Delta_H) \phi_h |_{H}, \phi_h |_{H} \rrr_{L^2(H)} \end{aligned} \right)-  I_0(a, \epsilon, \mu) \right|\\
&\leq  II_0(a, \epsilon, \mu). \end{aligned}
$$
Therefore, since by Lemma~\ref{II0PROP}, $II_0(q,\epsilon,\mu)\underset{\e\to 0}{\longrightarrow} 0$, 
$$
\lim_{\e\to 0}\lim_{ h  \to 0} \left(\begin{aligned}&\lll Op_h(a)  h D_\nu \phi_h |_{H} , h D_\nu \phi_h |_H \rrr_{L^2(H)} \\&\qquad+ \lll Op_h(a)  (1
+ h^2 \Delta_H) \phi_h |_{H}, \phi_h |_{H} \rrr_{L^2(H)} \end{aligned} \right)-  I_0(a, \epsilon, \mu)=0
$$
and, since the term in parentheses is independent of $\e$, 
\begin{align*}
&\lim_{ h  \to 0} \left(\begin{aligned}&\lll Op_h(a)  h D_\nu \phi_h |_{H} , h D_\nu \phi_h |_H \rrr_{L^2(H)} \\&\qquad+ \lll Op_h(a)  (1
+ h^2 \Delta_H) \phi_h |_{H}, \phi_h |_{H} \rrr_{L^2(H)} \end{aligned} \right)\\&=\lim_{\e\to 0} I_0(a, \epsilon, \mu)\\
&=\int_{S^*_HM\setminus S^*H}|\xi_n|a(\pi(\zeta))d\nu^{\perp}(\zeta).
\end{align*}
where the last equality follows from Lemma~\ref{l:calcII}. This completes the proof of Theorem~\ref{t:RCD}.
\end{proof}
    
 \subsection{\label{COMPLETIONSECT} Completion of the proof of Theorem \ref{thm1}}

 To complete the proof of Theorem \ref{thm1} we need to combine Theorem~\ref{t:RCD}  the Dyatlov-Jin(-Nonnenmacher) theorem. Suppose Theorem~\ref{thm1} is false. Then, there exists $a\in C_c^\infty(T^*H)$ with $a\geq 0$ and $\supp a\cap B^*H\neq \emptyset$, $h_j\to 0$ such that $\phi_{h_j}$ is a Laplace eigenfunction with eigenvalue $h_j^{-2}$ and 
 $$
 \lim_{j\to \infty}\langle Op_{h_j}(a)(1+h_j^2\Delta_H)\phi_{h_j},\phi_{h_j}\rangle_{L^2(H)}+\langle Op_{h_j}h_jD_\nu\phi_{h_j},h_jD_\nu\phi_{h_j}\rangle_{L^2(H)}=0. 
 $$
 Now, we can extract a subsequence such that $u$ has defect measure $\mu$ and its renormalized Cauchy data has defect measure $\mu^{RCD}$. Then, by Theorem~\ref{t:RCD}
  \begin{align*}
 \lim_{j\to \infty}&\langle Op_{h_j}(a)(1+h_j^2\Delta_H)\phi_{h_j},\phi_{h_j}\rangle_{L^2(H)}+\langle Op_{h_j}h_jD_\nu\phi_{h_j},h_jD_\nu\phi_{h_j}\rangle_{L^2(H)}\\
 &=\int ad\mu^{RCD}=\int_{S^*_HM\setminus S^*H} a(\pi(\zeta))|\xi_n(\zeta)|d\nu^{\perp}(\zeta).
 \end{align*}
 
The proof of Theoerm~\ref{thm1} will be completed by the following lemma. 
 \begin{lem} \label{DJI} Let  $(M,g)$ be a negatively curved surface, and let $H \subset M$ be a smooth curve. Then for any $a \in C(B^* H)$ satisfying
 $a \geq 0$ and $a \not= 0$,  there exists $C_{a, H} > 0$ so that
  $$\int_{S^*_HM\setminus S^*H}|\xi_n|a(\pi(\zeta))d\nu^{\perp}(\zeta)>C_{a,H}> 0. $$
  \end{lem}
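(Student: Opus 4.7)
The plan is to combine the disintegration formula of Lemma~\ref{l:inv} with the uniform Dyatlov-Jin(-Nonnenmacher) lower bound \eqref{INF} on open subsets of $S^*M$. Since $a\geq 0$ and $a\not\equiv 0$ on $B^*H$, first fix $(x_0,\xi_0')\in B^*H$ with $a(x_0,\xi_0')>0$. Using that $B^*H$ is open and $a$ continuous, select $c_0>0$, $\epsilon_0>0$, and an open neighborhood $U'$ of $(x_0,\xi_0')$ in $B^*H$ such that $a\geq \epsilon_0$ on $U'$ and $|\xi'|_{g_H}\leq 1-c_0$ on $U'$.

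Next lift $U'$ to $S^*_HM\setminus S^*H$: take $\xi_n=+\sqrt{1-|\xi'|_{g_H}^2}$ to obtain an open set $V\subset S^*_HM$ with $\pi(V)=U'$ and
$|\xi_n|\geq c_1:=\sqrt{c_0(2-c_0)}>0$ throughout $V$. Since $H_p x_n=2\xi_n$ does not vanish on $V$, the geodesic flow is uniformly transverse to $S^*_HM$ along $V$, so for $\delta>0$ sufficiently small the map $(t,\zeta)\mapsto \phi_t(\zeta)$ is a diffeomorphism of $(-\delta,\delta)\times V$ onto an open set $W\subset S^*M$, and $\phi_t(\zeta)\notin S^*_HM$ for $\zeta\in V$ and $0<|t|\leq \delta$. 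These are precisely the hypotheses of Lemma~\ref{l:inv}, which then yields $\nu^{\perp}(V)=\frac{1}{2\delta}\mu(W)$.

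Now invoke \eqref{INF}: for a pseudodifferential operator with nonnegative principal symbol supported in $W$ and not identically zero, one obtains a positive lower bound $\mu(W)\geq C_W>0$ uniform over all $\mu\in \qcal^*$ — this is the single place where negative curvature of $(M,g)$ enters. Combining these facts gives
$$
\int_{S^*_HM\setminus S^*H} a(\pi(\zeta))\,|\xi_n(\zeta)|\,d\nu^{\perp}(\zeta)\ \geq\ \epsilon_0 c_1 \nu^{\perp}(V)\ \geq\ \frac{\epsilon_0 c_1 C_W}{2\delta}\ =:\ C_{a,H}\ >\ 0,
$$
which is the desired conclusion. The only delicate point is the transversality step, which is exactly why one must restrict $U'$ to stay strictly inside $B^*H$ (away from the glancing set $|\xi'|_{g_H}=1$, where $\xi_n=0$ and the flow becomes tangent to $S^*_HM$); without the strict inequality $|\xi'|_{g_H}<1$, the flowout $W$ would degenerate and Lemma~\ref{l:inv}'s hypotheses would fail.
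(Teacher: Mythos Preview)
Your proof is correct and follows essentially the same approach as the paper's own proof. Both arguments pick a point of $B^*H$ where $a>0$, lift to an open set $V\subset S^*_HM\setminus S^*H$ where $|\xi_n|$ is bounded below, and then use the Dyatlov--Jin(--Nonnenmacher) result to show $\nu^{\perp}(V)>0$. Your version is slightly more explicit in two respects: you invoke Lemma~\ref{l:inv} directly to write $\nu^{\perp}(V)=\frac{1}{2\delta}\mu(W)$ for the flowout $W$, and you appeal to the uniform bound \eqref{INF} to make clear that $C_{a,H}$ is independent of the particular defect measure $\mu\in\qcal^*$; the paper instead phrases the same step as ``the support of $\nu^{\perp}$ equals $S^*_HM$'' and is less explicit about uniformity.
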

  
  \begin{proof}

 By the Dyatlov-Jin(-Nonnenmacher)theorem, the support of $\mu$ is $S^*M$. Hence, $\mu \not= \mu |_{S^*H}$, and the support
 of $\mu^{\perp}$ is also $S^*M$. It then follows from the invariance of $\mu$ under the geodesic flow that the support of $d\nu^{\perp}$ is equal to $S^*_H M$. Since $a$ is continuous, there is $q\in \supp a$ with   $|\xi_n(\zeta)| > 0$.  In particular, there is an open neighborhood $U$ of $\zeta$ such that $a(\pi(\zeta))|\xi_n(\zeta)|>c>0$ In particular, since $a\geq 0$,
$$\int_{S^*_HM\setminus S^*H}|\xi_n|a(\pi(\zeta))d\nu^{\perp}(\zeta)\geq c\nu^\perp (U)C_{a,H}> 0 $$ .
   
 \end{proof}

\section{Proof of Theorem \ref{t:CD} via hyperbolic equations}
\label{HYPERBOLIC}

In this section, we use hyperbolic equations to prove Theorem \ref{t:CD}. The idea is that $H$ is a Cauchy surface for a hyperbolic problem. In particular, when geodesics intersect $H$ transversally, we can think of $H$ as a Cauchy surface for the problem
$$
((hD_{x_n})^2-R(x,hD_{x'}))u=0,
$$
where $H=\{x_n=0\}$. Therefore, microlocally in this region,  $(u|_{H}, hD_\nu u|_{H})\mapsto u$ is a continuous map.

We start by factoring the operator $-h^2\Delta_g-1$ in the hyperbolic region. This lemma is a semiclassical version of~\cite[Lemma 23.2.8]{HoIII}.
\begin{lem}
\label{l:factor}
For all $\epsilon>0$ and $\delta>0$ small, there are 
$$\Lambda_{\pm}=\Lambda_{\pm}\in C^\infty((-\delta,\delta); \Psi^{\comp}(\mathbb{R}^{n-1})),$$ 
$\tilde{\Lambda}_{\pm}=\tilde{\Lambda}_{\pm}(x,hD_{x'})$ with
$$
\sigma(\Lambda)=\sigma(\tilde{\Lambda})=\sqrt{1-r(x,\xi')},\qquad r(x,\xi')<1-\epsilon^2
$$
such that for all $b\in C^\infty((-\delta,\delta); S^{\comp}(T^*\mathbb{R}^{n-1}))$ with $\supp b\subset \{r(x,\xi')<1-\epsilon\}$, 
\begin{align*}
b(x,hD_{x'})(-h^2\Delta_g-1)&=b(x,hD_{x'})(hD_{x_n}-\Lambda_-)(hD_{x_n}+\Lambda_+) +O(h^\infty)_{L^2\to L^2}\\
&=b(x,hD_{x'})(hD_{x_n}+\tilde{\Lambda}_+)(hD_{x_n}-\tilde{\Lambda}_-) +O(h^\infty)_{L^2\to L^2}\\
\end{align*}
\end{lem}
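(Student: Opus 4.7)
The plan is to construct the factorization by a standard semiclassical symbol iteration, localized to the hyperbolic region $\{r(x,\xi') < 1-\epsilon^2\}$ where the square root $\sqrt{1-r}$ is smooth. The cutoff $b$, supported in $\{r<1-\epsilon\}$, will allow us to ignore what happens near the glancing set $\{r=1\}$ up to $O(h^\infty)$.

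First I would use the expansion from Section~\ref{LAPSECT},
\[
-h^2\Delta_g - 1 = (hD_{x_n})^2 + R_2(x,hD_{x'}) - 1 + hR_1(x,hD_{x'},hD_{x_n}),
\]
and expand the desired product
\[
(hD_{x_n} - \Lambda_-)(hD_{x_n} + \Lambda_+) = (hD_{x_n})^2 + [hD_{x_n},\Lambda_+] + (\Lambda_+ - \Lambda_-)hD_{x_n} - \Lambda_- \Lambda_+.
\]
Matching principal symbols (on the support of $b$) forces $\sigma(\Lambda_\pm)^2 = 1-r$, which motivates choosing the principal symbols to be the smooth function $\sqrt{1-r}$ on $\{r<1-\epsilon^2/2\}$, extended arbitrarily and smoothly to all of $T^*\mathbb{R}^{n-1}$ so that a compactly supported (in $\xi'$) symbol results. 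Set $\Lambda_\pm^{(0)} := Op_h(\sqrt{1-r}\,\chi)$ for a suitable cutoff $\chi$ equal to $1$ on $\supp b$.

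Next I would iteratively correct: having found $\Lambda_\pm^{(N)} = \sum_{j=0}^{N} h^j \Lambda_\pm^{(j)}$ such that
\[
(hD_{x_n}-\Lambda_-^{(N)})(hD_{x_n}+\Lambda_+^{(N)}) - (-h^2\Delta_g - 1) \in h^{N+1}\Psi^{\comp}
\]
microlocally on $\supp b$, the next order correction $h^{N+1}\Lambda_\pm^{(N+1)}$ is chosen to cancel the principal symbol of the $h^{N+1}$ error. Because $\Lambda_+^{(0)} = \Lambda_-^{(0)}$ has elliptic principal symbol $\sqrt{1-r}\neq 0$ on $\{r<1-\epsilon\}$, at each step there is a pair $(\Lambda_-^{(N+1)},\Lambda_+^{(N+1)})$ making the error drop by one order; this is a linear algebraic (rather than PDE) step since the relevant equation reads, schematically, $\sqrt{1-r}\,(\lambda_+^{(N+1)} + \lambda_-^{(N+1)}) = e^{(N+1)}$ together with a free choice capturing the subprincipal asymmetry between the two factors. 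Borel summation then produces genuine symbols $\Lambda_\pm \in C^\infty((-\delta,\delta);S^{\comp})$ with $\sum h^j \Lambda^{(j)}_\pm$ as asymptotic expansions, and by construction
\[
b(x,hD_{x'})\bigl[(hD_{x_n}-\Lambda_-)(hD_{x_n}+\Lambda_+) - (-h^2\Delta_g-1)\bigr] = O(h^\infty)_{L^2\to L^2}.
\]
The symmetric factorization with $\tilde\Lambda_\pm$ is obtained by running the same argument after expanding $(hD_{x_n}+\tilde\Lambda_+)(hD_{x_n}-\tilde\Lambda_-)$; the commutator $[hD_{x_n},\tilde\Lambda_-]$ appears with the opposite sign, so the subprincipal corrections differ, but the inductive scheme is identical.

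The main obstacle is making sure the iteration is well-defined despite the square-root singularity of $\sqrt{1-r}$ at $r=1$: this is handled by the cutoff $b$, because at every step the error $e^{(N)}$ we need to divide by $\sqrt{1-r}$ lives microlocally in an operator composed on the left with $b$, and we only care about $\Lambda_\pm$ on an open set containing $\supp b$. One must be careful that the extensions of the $\Lambda_\pm^{(j)}$ outside $\{r<1-\epsilon/2\}$, though arbitrary, are smooth and compactly microsupported so that all compositions remain in $\Psi^{\comp}$ and the $O(h^\infty)$ statement is uniform in $x_n\in(-\delta,\delta)$; the smooth dependence on $x_n$ is inherited from that of $R_2$ and $R_1$ via the explicit recursion formulas.
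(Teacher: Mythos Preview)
Your proposal is correct and follows essentially the same route as the paper: introduce a cutoff $\chi$ supported in $\{r<1-\epsilon^2/2\}$ and equal to $1$ on $\{r<1-\epsilon^2\}$, take $\Lambda_\pm^{(0)}=Op_h(\chi\sqrt{1-r})$, and then iteratively solve for tangential corrections $\lambda_\pm^{(j)}$ by dividing the $h^j$ error by $2\lambda_0$ (the asymmetry $\lambda_1^- - \lambda_1^+ = a(x)$ absorbs the first-order normal term $ha(x)hD_{x_n}$), followed by Borel summation. The paper writes out the first two steps explicitly and then states the recursion, but the argument is the same as yours.
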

\begin{proof}
Fix $\chi=\chi(x,\xi')\in C_c^\infty(\mathbb{R}^{2n-1})$ with 
$$\chi \equiv 1\text{ on }\{r(x,\xi')<1-\e^2\},\qquad \supp \chi\subset \{r(x,\xi')<1-\frac{\e^2}{2}\}
$$ 
and set
$$
\lambda_0= \chi\sqrt{1-r(x,\xi')},\qquad \Lambda_0=\lambda_0(x,hD_{x'}).
$$
Recall that 
$$
-h^2\Delta_g-1=Op_h(|\xi_n|^2+r(x,\xi')-1)+h(a(x)hD_{x_n}+e(x,hD_{x'})).
$$
Then, 
\begin{align*}
&b(x,hD_{x'})(hD_{x_n}-\Lambda_0)(hD_{x_n}+\Lambda_0)\\
&=b(x,hD_{x'})(hD_{x_n}^2-\Lambda_0^2+[hD_{x_n},\Lambda_0])\\
&=b(x,hD_{x'})(-h^2\Delta_g-1-ha(x)hD_{x_n}-hr_0(x,hD_{x'}))+O(h^\infty)_{L^2\to L^2}.
\end{align*}
To obtain a finer factorization for $(-h^2\Delta_g-1)$, we put
$$
\lambda_1^-=a(x)+\frac{r_0\chi^2}{2\lambda_0},\qquad \lambda_1^+=\frac{r_0\chi^2}{2\lambda_0}
$$
and write
$$
\Lambda_1^-=\Lambda_0+h\lambda_1^-(x,hD_{x'}),\qquad \Lambda_1^+=\lambda_0+h\lambda_1^+(x,hD_{x'}).
$$
Then, we have
$$
b(x,hD_{x'})(hD_{x_n}-\Lambda_1^-)(hD_{x_n}+\Lambda_1^+)=b(x,hD_{x'})(-h^2\Delta_g-1-h^2r_1(x,hD_{x'}))+O(h^\infty)_{L^2\to L^2}.
$$
Define $r_j(x,\xi')$, $j\geq 1$ iteratively by
$$
b(x,hD_{x'})(hD_{x_n}-\Lambda_j^-)(hD_{x_n}+\Lambda_j^+)=b(x,hD_{x'})(-h^2\Delta_g-1-h^{j+1}r_j(x,hD_{x'}))+O(h^\infty)_{L^2\to L^2}.
$$
Then, for $j\geq 2$, define $\Lambda_{j}^{\pm}$ by
$$
\Lambda_j^{\pm}=\Lambda_{j-1}^{\pm}+h^j\lambda_j(x,hD_{x'}),\qquad \lambda_j(x,hD_{x'})=\frac{r_{j-1}\chi^2}{2\lambda_0}.
$$
Letting $\Lambda_{\pm}\sim \sum_{j=0}^\infty h^j\lambda_{j}^{\pm}(x,hD_{x'}),$ the claim is proved for the first factorization.

The proof is nearly identical for the other factorization.

\end{proof}

Next, we use the factorization from Lemma~\ref{l:factor} to produce propagation estimates for the Cauchy problem posed on $\{x_n=0\}$.
\begin{lem}
\label{l:propagate}
Let $b_0\in C_c^\infty(T^*H)$ with $\supp b_0\subset B^*H:=\{(x',\xi')\mid r(0,x',\xi')<1\}$. Then there is $\delta>0$ such that if $a\in C_c^\infty(T^*M)$ with 
$$
\supp a\cap S^*M\subset \bigcup_{|t|<\delta} \varphi_t(\pi^{-1}(\{|b_0|>0\}))
$$
where $\pi:S_H^*M\to T^*H$ denotes the projection and $\varphi_t:=\exp(tH_{|\xi|_g^2})$, we have the estimate
\begin{multline*}
\|Op_h(a)u\|_{L^2(M)}\leq C\|Op_h(b_0)u|_H\|_{L^2(H)}+C\|Op_h(b_0)h\partial_\nu u|_{H}\|_{L^2(H)}\\
+Ch^{-1}\|(-h^2\Delta_g-1)u\|_{L^2})+O(h^\infty)\|u\|_{L^2}.
\end{multline*}
\end{lem}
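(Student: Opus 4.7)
The plan is to combine the factorization of Lemma~\ref{l:factor} with standard semiclassical propagation estimates for first-order hyperbolic operators, viewing $H=\{x_n=0\}$ as a Cauchy surface in the Fermi normal chart of Section~\ref{LAPSECT}. Since $\supp b_0\subset B^*H=\{r(0,x',\xi')<1\}$ is compact there is $\epsilon>0$ with $r(0,\cdot,\cdot)\le 1-\epsilon^2$ on $\supp b_0$; by continuity of $r$ and of $\varphi_t$ I would then pick $\delta>0$ so small that
$$
\Omega_\delta:=\bigcup_{|t|<\delta}\varphi_t\bigl(\pi^{-1}(\{|b_0|>0\})\bigr)\subset\{|x_n|<\delta\}\cap\{r(x,\xi')<1-\tfrac{\epsilon^2}{2}\}.
$$
On symbols supported in a small neighborhood of $\Omega_\delta$ Lemma~\ref{l:factor} then supplies both factorizations
$$
(-h^2\Delta_g-1)\equiv(hD_{x_n}-\Lambda_-)(hD_{x_n}+\Lambda_+)\equiv(hD_{x_n}+\tilde\Lambda_+)(hD_{x_n}-\tilde\Lambda_-)\pmod{O(h^\infty)_{L^2\to L^2}},
$$
with $\sigma(\Lambda_\pm)=\sigma(\tilde\Lambda_\pm)=\sqrt{1-r}$.

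Next I would set $w_+:=(hD_{x_n}+\Lambda_+)u$ and $w_-:=(hD_{x_n}-\tilde\Lambda_-)u$; heuristically these capture the $\xi_n>0$ and $\xi_n<0$ portions of $u$ on the characteristic variety. The two factorizations give, after cutting off near $\Omega_\delta$,
$$
(hD_{x_n}-\Lambda_-)w_+=f+O(h^\infty)u,\qquad(hD_{x_n}+\tilde\Lambda_+)w_-=f+O(h^\infty)u,\qquad f:=(-h^2\Delta_g-1)u.
$$
The combination $w_+-w_-=(\Lambda_++\tilde\Lambda_-)u$ has principal symbol $2\sqrt{1-r}\ge\sqrt{2}\,\epsilon$ on $\Omega_\delta$, hence is elliptic there, so microlocal inversion yields
$$
Op_h(a)\,u=Op_h(a)(\Lambda_++\tilde\Lambda_-)^{-1}(w_+-w_-)+O(h^\infty)\|u\|_{L^2}.
$$
The task then reduces to bounding $\|Op_h(\chi_+)w_+\|_{L^2}$ and $\|Op_h(\chi_-)w_-\|_{L^2}$ for cutoffs $\chi_\pm$ whose supports cover the two characteristic sheets $\xi_n=\pm\sqrt{1-r}$ of $\supp a\cap S^*M$.

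Each of these bounds is a standard semiclassical positive-commutator (or Duhamel) estimate for a first-order hyperbolic operator with real principal symbol, propagating $\chi_\pm$ from $H$ along the appropriate half of the bicharacteristic flow over time $|t|<\delta$; it delivers
$$
\|Op_h(\chi_\pm)w_\pm\|_{L^2(M)}\le C\|Op_h(b_0)w_\pm|_H\|_{L^2(H)}+Ch^{-1}\|f\|_{L^2(M)}+O(h^\infty)\|u\|_{L^2}.
$$
Since $w_\pm|_H=hD_{x_n}u|_H+B_\pm u|_H$ for bounded operators $B_\pm\in\Psi^0(H)$ (the restrictions of $\Lambda_+$ and $-\tilde\Lambda_-$ to $x_n=0$), the boundary contribution is controlled by $\|Op_h(b_0)u|_H\|_{L^2(H)}+\|Op_h(b_0)h\partial_\nu u|_H\|_{L^2(H)}$, and assembling these estimates with the elliptic recovery formula produces the desired inequality.

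The main obstacle is the microlocal bookkeeping: the commutants in the two energy estimates have to be designed so that their flow-backs under the Hamilton flows of $\xi_n\mp\sqrt{1-r}$ for times $|t|\le\delta$ land inside $\pi^{-1}(\{|b_0|>0\})\subset S^*_HM$. This is possible precisely because $\Omega_\delta$ stays away from the glancing set $\{\xi_n=0\}$ and every point of $\Omega_\delta$ lies on a unique short bicharacteristic segment emanating from $H$ that sits on exactly one of the two characteristic sheets. Ensuring that these two half-flow estimates combine cleanly to cover $\supp a$, with all error terms absorbed into the $O(h^\infty)\|u\|$ remainder via Egorov's theorem, is the essential technical step.
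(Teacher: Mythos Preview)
Your proposal is correct and follows essentially the same strategy as the paper: factor via Lemma~\ref{l:factor}, propagate the two first-order hyperbolic pieces $(hD_{x_n}\mp\Lambda_\mp)$ from $H$ by energy estimates, and recover $Op_h(a)u$ by an elliptic parametrix in the region where $\sqrt{1-r}$ is bounded below. The only cosmetic difference is that the paper builds tangential symbols $b^\pm$ solving the transport equations $(\partial_{x_n}\mp H_\lambda)b^\pm=0$ to all orders (so that $[Op_h(b^\pm),hD_{x_n}\mp\Lambda_\mp]=O(h^\infty)$) and then uses ellipticity of $(b^+(\xi_n-\lambda))^2+(b^-(\xi_n+\lambda))^2$ on $\supp a\cap S^*M$, whereas you invoke the propagation as a black box and invert $\Lambda_++\tilde\Lambda_-$; the content is the same.
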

\begin{proof}
Fix $b_0$ as above and let $\e>0$ such that $\supp b_0\subset \{r(x,\xi')<1-\e\}$.  Next, let $\Lambda_{\pm}$, $\tilde{\Lambda}_{\pm}$ as in~Lemma~\ref{l:factor} and $\lambda=\sqrt{1-r(x,\xi')}$. Finally, let $\tilde{b}_0\in C_c^\infty(T^*H)$ with $\supp \tilde{b}\subset \supp b_0$, and $c>0$ such that 
\begin{equation}
\label{e:suppa}
\supp a\cap S^*M\subset \bigcup_{|t|<\delta} \varphi_t(\pi^{-1}(\{|\tilde{b}|>c/2\})).
\end{equation}

We start by defining $b^-\in C^\infty((-3\delta_0,3\delta_0); S^{\comp}(T^*\mathbb{R}^{n-1})$ such that 
\begin{equation}
\label{e:commuting}
\operatorname{WF_h}([Op_h(b^-), D_{x_n}-\Lambda_-]\cap \{|x_n|<2\delta_0\}=\emptyset.
\end{equation}
and $b^-(0,x',\xi')=\tilde{b}$.
To do this, define $\tilde{b}_0=\tilde{b}_0(x,\xi')$ by
\begin{equation}
\label{e:transport}
\tilde{b}_0(0,x',\xi')=\tilde{b},\qquad (\partial_{x_n}-H_\lambda) \tilde{b}_0=0,
\end{equation}
Next, define iteratively for $j\geq 1$,
$$
h^{j}Op_h(e_{j-1})=ih^{-1}\Big[hD_{x_n}-\Lambda_-, Op_h\Big(\sum_{k=0}^{j-1}h^k\tilde{b}_k\Big)\Big],\qquad \begin{aligned}(\partial_{x_n}-\lambda)\tilde{b}_j&=e_j,\\
\tilde{b}_j(0,x',\xi')&=0.\end{aligned}
$$
Then, putting $b^-\sim \sum_j h^j \tilde{b}_j$, we have~\eqref{e:commuting}.

Note that there is $\delta_0>0$ depending only on $\e>0$ such that a solution to~\eqref{e:transport} exists for $|x_n|<\delta_0$ and, 
$$
\supp b^-\cap \{|x_n|<3\delta_0\}\subset \{r(x,\xi')<1-\e^2\}.
$$

By standard energy estimates (see e.g ~\cite[Lemma 23.1.1]{HoIII})
\begin{multline*}
\|b^-(x,hD_{x'})(hD_{x_n}+\Lambda_+)u\|_{L^2}\\
\leq C(\|Op_h(\tilde{b})(hD_{x_n}+\Lambda_+)\|_{L^2(H)}+h^{-1}\|(hD_{x_n}-\Lambda_-)b^-(x,hD_{x'})(hD_{x_n}+\Lambda_+)u\|_{L^2(|x_n|<\delta_0)}.
\end{multline*}
Next, observe that by~\eqref{e:commuting}, for $\chi\in C_c^\infty((-2\delta_0,2\delta_0))$
$$
\chi(x_n)[hD_{x_n}-\Lambda, b^-(x,hD_{x'})]=O(h^\infty)_{L^2\to L^2}
$$
and hence
\begin{align*}
&\|b^-(x,hD_{x'})(hD_{x_n}+\Lambda_+)u\|_{L^2}\\
&\leq C(\|Op_h(\tilde{b})(hD_{x_n}+\Lambda_+)\|_{L^2(H)}+h^{-1}\|b^-(x,hD_{x'})(hD_{x_n}-\Lambda_-)(hD_{x_n}+\Lambda_-)u\|_{L^2}\\
&\qquad +O(h^\infty)\|(hD_{x_n}+\Lambda_+)u\|_{L^2}\\
&= C(\|Op_h(\tilde{b})(hD_{x_n}+\Lambda_+)\|_{L^2(H)}+h^{-1}\|(-h^2\Delta_g-1)u\|_{L^2}\\
&\qquad +O(h^\infty)\|(hD_{x_n}+\Lambda_+)u\|_{L^2}+O(h^\infty)\|u\|_{L^2}\\
\end{align*}
Next, by the elliptic parametrix construction
$$
\|(hD_{x_n}-\Lambda_-)u\|_{L^2}\leq C\|(-h^2\Delta_g+1)u\|_{L^2}\leq C(\|(-h^2\Delta_g-1)u\|_{L^2}+\|u\|_{L^2})
$$
Therefore, 
\begin{multline}
\label{e:squirrel1}
\|b^-(x,hD_{x'})(hD_{x_n}+\Lambda_+)u\|_{L^2}\\
\leq C(\|Op_h(\tilde{b})(hD_{x_n}+\Lambda_+)u\|_{L^2(H)}+h^{-1}\|(-h^2\Delta_g-1)u\|_{L^2}+O(h^\infty)\|u\|_{L^2}).
\end{multline}

Next, we construct $b^+\in C^\infty((-3\delta_0,\delta_0); S^{\comp}T^*\mathbb{R}^{n-1})$, such that 
defining $b^+$ such that  
\begin{equation}
\label{e:commuting2}
\operatorname{WF_h}([Op_h(b^+), D_{x_n}+\tilde{\Lambda}_+]\cap \{|x_n|<2\delta_0\}=\emptyset.
\end{equation}
In particular, we start with $\tilde{b}_0=\tilde{b}_0(x,\xi')$ such that 
\begin{equation}
\label{e:transport2}
\tilde{b}_0(0,x',\xi')=\tilde{b},\qquad (\partial_{x_n}+H_\lambda) \tilde{b}_0=0
\end{equation}
and proceed as in the construction of $b^-$.

We then obtain the estimate
\begin{multline}
\label{e:octopus1}
\|b^+(x,hD_{x'})(hD_{x_n}-\tilde{\Lambda}_-)u\|_{L^2}\\
\leq C(\|Op_h(\tilde{b})(hD_{x_n}-\Lambda_-)u\|_{L^2(H)}+h^{-1}\|(-h^2\Delta_g-1)u\|_{L^2} +O(h^\infty)\|u\|_{L^2}).
\end{multline}

Next, observe that on $\{\mp\xi_n>0\}\cap S^*M$, 
$$
(\partial_{x_n}\pm H_{\lambda})= (\xi_n\mp \lambda)^{-1} H_{|\xi|_g^2-1}.
$$
Therefore, by~\eqref{e:transport} and~\eqref{e:transport2}, $\sigma(b^{\pm})$ is locally invariant under the geodesic flow on $S^*M\cap \{\mp \xi_n>0\}$. In particular, there is $\delta_1>0$ such that 
$$
\{|b^{\pm}|>c>0\}\cap S^*M\supset \bigcup_{|t|<\delta_1} \varphi_t( \{(x,\xi)\in S_H^*M\mid \mp \xi_n>0,\, |\tilde{b}(x,\xi')|>c/2>0\}.
$$
Therefore, there is $c>0$ such that 
$$
[b^+(\xi_n-\lambda)]^2+[b^-(\xi_n+\lambda)]^2>c>0\qquad\text{ on }  \bigcup_{|t|<\delta_1} \varphi_t( \{(x,\xi)\in S_H^*M\mid  |\tilde{b}(x,\xi')|>c/2\}
$$
In particular, by the elliptic parametrix construction and~\eqref{e:suppa} there are $e_i\in C_c^\infty(T^*M)$, $i=1,2,3$ such that
\begin{multline}
\label{e:squirrel3}
Op_h(a)=Op_h(e_1)b^+(x,hD_{x'})(hD_{x_n}-\tilde{\Lambda}_-)\\+Op_h(e_1)b^-(x,hD_{x'})(hD_{x_n}+\Lambda_+)
+Op_h(e_2)(-h^2\Delta_g-1)+O(h^\infty)_{L^2\to L^2}.
\end{multline}
Therefore, by~\eqref{e:squirrel1},~\eqref{e:octopus1}, and~\eqref{e:squirrel3}
\begin{multline*}
\|Op_h(a)u\|_{L^2}\leq C\|Op_h(\tilde{b})(hD_{x_n}+\Lambda_+)u\|_{L^2(H)}+C\|Op_h(\tilde{b})(hD_{x_n}-\tilde{\Lambda}_-)u\|_{L^2(H)}\\+Ch^{-1}\|(-h^2\Delta_g-1)u\|_{L^2}+O(h^\infty)\|u\|_{L^2}
\end{multline*}
Finally, note that 
\begin{equation*}
\begin{aligned}
\|Op_h(\tilde{b})(hD_{x_n}+\Lambda_+)u\|_{L^2(H)}&\leq \|Op_h(\tilde{b})hD_{\nu}u\|_{L^2(H)}+\|Op_h(\tilde{b})\Lambda_+u\|_{L^2(H)}\\
&\leq  \|Op_h(b_0)hD_{\nu}u\|_{L^2(H)}+\|Op_h(b_0)u\|_{L^2(H)}+O(h^\infty)\|u\|_{L^2(H)}\\
&\leq  \|Op_h(b_0)hD_{\nu}u\|_{L^2(H)}+\|Op_h(b_0)u\|_{L^2(H)}\\
&\qquad +O(h^\infty)(\|(-h^2\Delta_g-1)u\|_{L^2}+\|u\|_{L^2})
\end{aligned}
\end{equation*}
where in the next to last line we use that $\supp\tilde{b}\subset \supp b_0$ and in the last line, we use that Sobolev embedding.
Similarly,
\begin{equation*}
\begin{aligned}
\|Op_h(\tilde{b})(hD_{x_n}-\tilde{\Lambda}_-)u\|_{L^2(H)}
&\leq  \|Op_h(b_0)hD_{\nu}u\|_{L^2(H)}+\|Op_h(b_0)u\|_{L^2(H)}\\
&\qquad +O(h^\infty)(\|(-h^2\Delta_g-1)u\|_{L^2}+\|u\|_{L^2}),
\end{aligned}
\end{equation*}
which completes the proof.
\end{proof}

\begin{proof}[Proof of Theorem~\ref{t:CD}]
Suppose~\eqref{e:claim} does not hold. Then,
$$
\lim_{h\to 0}\|Op_h(b_0)u|_{H}\|_{L^2(H)}+\|Op_h(b_0)h\partial_\nu u|_{H}\|_{L^2(H)}=0.
$$
In particular, by Lemma~\ref{l:propagate}, there is $a\in C_c^\infty(T^*M)$ with $a \cap S^*M\neq 0$ such that 
$$
\lim_{h\to 0}\|Op_h(a)u\|_{L^2(M)}=0.
$$
But, this contradictions the results of ~\cite{DJ17,DJN19}.

To prove the second claim, observe that by unique continuation, see e.g.~\cite[Theorem 1.7]{GL17}, there is $c>0$ such that for all $h>0$, 
\begin{equation}
\label{e:uniqueContinue}
\|u\|_{L^2(M)}<Ce^{C/h}(\|u|_{H}\|_{L^2(U)}+\|h\partial_\nu u|_{H}\|_{L^2(U)}).
\end{equation}
Combining~\eqref{e:uniqueContinue} with~\eqref{e:claim} proves~\eqref{e:lowerBound}.
\end{proof}




\end{document}